\newcommand{\indentalign}{\hspace{0.3in}&\hspace{-0.3in}}
\newcommand{\la}{\langle}
\newcommand{\ra}{\rangle}
\newcommand{\sech}{\operatorname{sech}}
\newcommand{\defeq}{\stackrel{\rm{def}}{=}}
\newcommand{\spn}{\operatorname{span}}
\newtheorem{theorem}{Theorem}
\newtheorem{definition}[theorem]{Definition}
\newtheorem{proposition}{Proposition}[section]
\newtheorem{lemma}[proposition]{Lemma}
\newtheorem{corollary}[proposition]{Corollary}
\theoremstyle{remark}
\newtheorem{remark}[proposition]{Remark}
\numberwithin{equation}{section}
\title[Dynamics of KdV solitons]
{Dynamics of KdV solitons in the presence of a slowly varying potential}
\author{Justin Holmer}
\address{Brown University, Department of Mathematics, Box 1917, Providence, RI 02912, USA}
\email{holmer@math.brown.edu}
\begin{document}

\maketitle

\begin{abstract}
We study the dynamics of solitons as solutions to the perturbed KdV (pKdV) equation $\partial_t u = -\partial_x ( \partial_x^2 u + 3u^2-bu)$, where $b(x,t) = b_0(hx,ht)$, $h\ll 1$ is a slowly varying, but not small, potential.  We obtain an explicit description of the trajectory of the soliton parameters of scale and position on the dynamically relevant time scale $\delta h^{-1}\log h^{-1}$, together with an estimate on the error of size $h^{1/2}$.  
 In addition to the Lyapunov analysis commonly applied to these problems, we use a local virial estimate due to Martel-Merle \cite{MM}.  The results are supported by numerics.  The proof does not rely on the inverse scattering machinery and is expected to carry through for the $L^2$ subcritical gKdV-$p$ equation, $1<p<5$.  The case of $p=3$, the modified Korteweg-de Vries (mKdV) equation, is structurally simpler and more precise results can be obtained by the method of Holmer-Zworski \cite{HZ2}. 
\end{abstract}

\section{Introduction}
\label{S:intro}

The Korteweg-de Vries (KdV) equation
\begin{equation}
\label{E:KdV}
\partial_t u = \partial_x ( -\partial_x^2 u - 3u^2)
\end{equation}
is globally well-posed in $H^k$ for $k\geq 1$ (see Kenig-Ponce-Vega \cite{KPV}).  It possesses soliton solutions $u(t,x) = \eta(x, a+4c^2t, c)$, where $\eta(x,a,c) = c^2\theta(c(x-a))$ and $\theta(y) = 2\sech^2y$ (so that $\theta''+3\theta^2=4\theta$).   Benjamin \cite{Benjamin}, Bona \cite{Bona},  and Bona-Souganidis-Strauss \cite{BSS} showed that these solitons are orbitally stable under perturbations of the initial data.  We consider here the behavior of these solitons under structural perturbations, i.e. Hamiltonian perturbations of the equation \eqref{E:KdV} itself.  Dejak-Sigal \cite{DS}, motivated by a model of shallow water wave propagation over a slowly-varying bottom, have considered the perturbed KdV (pKdV)
\begin{equation}
\label{E:pKdV}
\partial_t u = \partial_x ( -\partial_x^2 u - 3u^2 + bu)
\end{equation}
where $b(x,t) = h^{1+\delta}b_0(hx,ht)$ and $h\ll 1$.  They proved that the effects of this potential are small on the dynamically relevant time frame.  We consider instead $b(x,t) = b_0(hx,ht)$, a slowly-varying \emph{but not small} potential,\footnote{Dejak-Sigal \cite{DS} state a more general result that appears to allow for potentials that are not small.  However, the smallness in their result is required to reach the dynamically relevant time frame $\sim h^{-1}$.  See the comments below in \S \ref{S:earlierwork}.} which allows for considerably richer dynamics.

 To state our main theorem, we need the following definition:
 
\begin{definition}[Asymptotic time-scale]
\label{D:time-scale}
Given $b_0\in C^\infty_c(\mathbb{R}^2)$, $A_0\in \mathbb{R}$, $C_0>0$, and $\delta>0$, let $A(\tau)$, $C(\tau)$ solve the system of ODEs
\begin{equation}
\label{E:ODEs-scaled}
\left\{
\begin{aligned}
&\dot A = 4C^2 - b_0(A,\cdot) \\
&\dot C = \frac13 C\partial_A b_0(A,\cdot)
\end{aligned}
\right.
\end{equation}
with initial data $A(0)=A_0$ and $C(0)=C_0$.
Let $T_*$ be the maximal time such that on $[0,T_*)$, we have $\delta\leq C(\tau)\leq \delta^{-1}$. ($T_*$ could be $+\infty$.)
\end{definition}

Let $\la u,v \ra = \int uv$. 

\begin{theorem}
\label{T:main}
Given $b_0\in C^\infty_c(\mathbb{R}^2)$, $A_0\in \mathbb{R}$, $C_0>0$, and $\delta>0$, let $T_*$ be the time defined in Def. \ref{D:time-scale}.  Let $a_0=h^{-1}A_0$ and $c_0=C_0$.  Then for $0\leq t\leq T \defeq h^{-1}\min(T_*, \delta \log h^{-1})$, there exist trajectories $a(t)$ and $c(t)$, and positive constants $\epsilon=\epsilon(\delta)$ and $C=C(\delta, b_0)$, such that the following holds.  Taking $u(t)$ the solution of \eqref{E:pKdV} with potential $b(x,t) = b_0(hx,ht)$ and initial data $\eta(\cdot, a_0,c_0)$, let $v(x,t) \defeq u(x,t) - \eta(x, a(t),c(t))$. Then 
\begin{equation}
\label{E:v-global}
\|v\|_{L_{[0,T]}^\infty H_x^1} \lesssim  h^{1/2}e^{Cht} \,,
\end{equation}
\begin{equation}
\label{E:v-local}
\|e^{-\epsilon |x-a|} v\|_{L_{[0,T]}^2H_x^1} \lesssim h^{1/2} e^{Cht} \,,
\end{equation}
and
\begin{equation}
\label{E:so}
\la v, \eta(\cdot,a,c) \ra =0 \,, \qquad \la v, (x-a) \eta(\cdot,a,c) \ra =0 \,.
\end{equation}
Moreover,
\begin{equation}
\label{E:traj-est}
|a(t) - h^{-1}A(ht)| \lesssim e^{Cht} \,, \qquad |c(t) - C(ht)| \lesssim he^{Cht} \,.
\end{equation}
\end{theorem}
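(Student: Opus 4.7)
\emph{Modulation decomposition.} I would begin by writing $u(t) = \eta(\cdot, a(t), c(t)) + v(t)$ and selecting $a(t), c(t)$ via the implicit function theorem to enforce the orthogonality conditions \eqref{E:so}; this is legitimate on any interval where $\|v(t)\|_{H^1}$ stays small, a property to be bootstrapped. Differentiating \eqref{E:so} in $t$ and inserting \eqref{E:pKdV} yields modulation ODEs of the schematic form
\[
\dot a = 4c^2 - b_0(ha, ht) + O(h) + O(\|v\|_{H^1}), \qquad \dot c = \tfrac13 c\, \partial_A b_0(ha, ht) + O(h) + O(\|v\|_{H^1}),
\]
where the $O(h)$ remainders come from Taylor-expanding the slowly varying $b_0(h\cdot, ht)$ across the $O(c^{-1})$-wide soliton centered at $a$, integrated against the weights dual to \eqref{E:so}. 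Already at this stage these equations match \eqref{E:ODEs-scaled} up to errors driven by $\|v\|_{H^1}$.

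\emph{Lyapunov functional and local virial.} Motivated by the fact that $\eta(\cdot,a,c)$ is a critical point of the Weinstein functional $H(u) + 4c^2 M(u)$ for the unperturbed KdV, I would define the augmented action
\[
\mathcal L(t) \defeq H(u) + 4c(t)^2 M(u) + \tfrac12 \la b u, u\ra, \qquad H(u) = \int (\tfrac12 u_x^2 - u^3), \quad M(u) = \tfrac12 \int u^2,
\]
so that $\dot{\mathcal L} = 8c\dot c\, M(u) + \tfrac12 \la \partial_t b\cdot u, u\ra$, a quantity of size $O(h)$ after the modulation equation for $\dot c$ is inserted. Expanding $\mathcal L(\eta + v)$ to second order produces a quadratic form $\tfrac12\la \mathbb H(c,b) v, v\ra$ which, restricted by \eqref{E:so} and for $h$ small enough that the $b$-term is a bounded perturbation, is coercive on $H^1$ by the standard Weinstein argument. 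The Lyapunov bound alone does not sharply control the translation direction, so I would add a Martel-Merle local virial quantity \cite{MM} of the form $J(t) = \int \psi(c(x-a(t)))\,v^2\,dx$ with $\psi$ a monotone cutoff. Differentiating $J$ using \eqref{E:pKdV} and the modulation ODEs gives a negative Kato-smoothing contribution $\sim -\int \psi'(c(x-a))(v_x^2 + v^2)$ which controls $\|e^{-\epsilon|x-a|}v\|_{H^1}^2$, up to $O(h)$ driving terms. Integrating in time and combining with the Lyapunov bound via Gronwall yields \eqref{E:v-global} and \eqref{E:v-local} on $[0,T]$; the growth factor $e^{Cht}$ arises from terms of the form $h\|v\|_{H^1}^2$ in $\dot{\mathcal L}$, and caps the useful time horizon at $T = \delta h^{-1}\log h^{-1}$.

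\emph{Trajectory comparison and the main obstacle.} Setting $\tilde A(t) \defeq h^{-1}A(ht)$ and $\tilde C(t) \defeq C(ht)$, these satisfy a rescaled version of \eqref{E:ODEs-scaled}; subtracting from the modulation equations of the first step gives a linear ODE for $(a - \tilde A, c - \tilde C)$ with forcing bounded by $O(h) + O(\|v\|_{H^1})$, and Gronwall produces \eqref{E:traj-est}. \emph{The main obstacle} is closing the Lyapunov--virial bootstrap in the presence of an $O(1)$ potential: the ``slowness'' of $b$ yields smallness only after the modulation equations are used to extract the leading dynamics, and the Martel-Merle virial is essential to prevent the translation-direction defect from producing secular growth over the full time scale $\delta h^{-1}\log h^{-1}$. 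Balancing the three estimates---modulation, Lyapunov coercivity, and local virial---simultaneously to obtain the $h^{1/2}$ error over this time horizon is the technical heart of the proof.
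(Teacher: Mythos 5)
Your overall strategy --- modulation via the implicit function theorem, Weinstein-type coercivity of the linearized energy under \eqref{E:so}, and a Martel--Merle local virial identity --- is exactly the paper's. The gap is in the quantitative form of your modulation equations and, consequently, in how the bootstrap and the trajectory comparison close. You record the errors in the equations for $\dot a$ and $\dot c$ as $O(h)+O(\|v\|_{H^1})$. Since the best available global bound is $\|v\|_{H^1}\sim h^{1/2}$, Gronwall over an interval of length $h^{-1}$ then yields only $|c-C(ht)|\lesssim h^{-1}\cdot h^{1/2}=h^{-1/2}$ (and similarly for $a$), which is far from \eqref{E:traj-est}. The point of Lemma \ref{L:param} is that, because $F_0$ and the weights $\eta$, $(x-a)\eta$ are exponentially localized at the soliton, the errors are in fact $h\|e^{-\epsilon|x-a|}v\|_{H^1}+\|e^{-\epsilon|x-a|}v\|_{H^1}^2+h^2$, i.e.\ controlled by the \emph{local} norm with an extra factor of $h$ on the linear term; only the $L^2_t$ bound \eqref{E:v-local} on that local norm makes these errors integrate to $O(h)$ for $\dot c$ and $O(1)$ for $\dot a$ over $[0,h^{-1}]$. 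The same issue infects your Lyapunov step: the linear-in-$v$ driving term in $\dot{\mathcal L}$ must be $h\|e^{-\epsilon|x-a|}v\|_{H^1}$ rather than $h\|v\|_{H^1}$, since with the global norm it would integrate to $h\cdot h^{-1/2}\cdot\|v\|_{L^2_TH^1}\sim h^{1/2}\gg h\sim\|v\|_{H^1}^2$ and the $h^{1/2}$ bootstrap would not close. So the local virial estimate is not merely a safeguard against ``secular growth in the translation direction''; it is what renders every error term integrable at the right order.

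A second, related omission: because $\partial_x^{-1}\partial_c\eta\notin L^2$, the second condition in \eqref{E:so} is \emph{not} the symplectic orthogonality in the $c$-direction, so the term $\la \partial_x\mathcal{K}v,(x-a)\eta\ra$ in the $\dot a$ equation does not vanish and is not pointwise small. The paper carries it explicitly in \eqref{E:a} and cancels it against the correction term in the scaled Martel--Merle inequality (Corollary \ref{C:MM}); this cancellation is precisely why the orthogonality condition $\la v,(x-a)\eta\ra=0$ is chosen. Your sketch treats the scaling/translation defects as generic $O(\|v\|)$ remainders and so misses this cancellation, which is the technical heart of obtaining \eqref{E:v-local} and \eqref{E:traj-est} simultaneously. (A minor further point: to reach $T\sim h^{-1}\log h^{-1}$ the paper does not run a single Gronwall argument but iterates the unit-dynamical-time proposition, with the constant degrading geometrically at each step; this is where the $e^{Cht}$ factors actually come from.)
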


Up to time $O(h^{-1})$, $a(t)$ is of size $O(h^{-1})$ and $c(t)$ is of size $O(1)$, and \eqref{E:traj-est} gives leading-order in $h$ estimates for $a(t)$ and $c(t)$ -- that is, despite the differences in magnitudes, the estimates for $a(t)$ and $c(t)$ provided by \eqref{E:traj-est} are equally strong.
The strength of the local estimate \eqref{E:v-local}, in comparison to the global estimate \eqref{E:v-global} on the error $v$, is that it involves integration in time over a (long) interval of length $O(h^{-1})$.  The estimate \eqref{E:v-local} is \textit{on par}, although slightly weaker than, the pointwise-in-time estimate $\|e^{-\epsilon |x-a|}v\|_{L_{[0,T]}^\infty L_x^2} \leq he^{Cht}$.   The two estimates \eqref{E:v-global}, \eqref{E:v-local} are consistent (but not equivalent to) $v$ being of amplitude $h$ but effectively supported over an interval of size $O(h^{-1})$, which is suggested by numerical simulations.  The trajectory estimates \eqref{E:traj-est} state that we can predict the center of the soliton to within accuracy $O(1)$ and the amplitude to within accuracy $O(h)$.  (This discussion does not include the $h^{-\delta}$ loss that occurs when passing to the natural Ehrenfest time scale $\delta h^{-1}\log h^{-1}$.)

To define the Hamiltonian structure associated with \eqref{E:pKdV}, let $J= \partial_x$ with
$$J^{-1}f(x) = \partial_x^{-1}f(x) \defeq \frac12 \left(\int_{-\infty}^x - \int_{x}^{+\infty}\right) f(y) \, dy \,.$$
We regard the function space $N=H^1(\mathbb{R})$ as a symplectic manifold with symplectic form $\omega(u,v) = \la u, J^{-1}v\ra$ densely defined on the tangent space $TN \simeq H^1$.  Then \eqref{E:pKdV} is the Hamilton flow $\partial_t u = JH'(u)$ associated with the Hamiltonian 
\begin{equation}
\label{E:Hamiltonian}
H = \frac12 \int (u_x^2 - 2u^3 +bu^2) \,.
\end{equation}
Let $M\subset N=H^1$ denote the two-dimensional submanifold of solitons
$$M = \{ \, \eta(\cdot, a,c) \; | \; a\in \mathbb{R} \,, \, c>0 \, \} \,.$$
By direct computation, we compute the restricted symplectic form $\omega\big|_M = 8c^2 da\wedge dc$ (thus $M$ is a \emph{symplectic} submanifold of $N$) and restricted Hamiltonian $H\big|_M = -\frac{32}{5}c^5 + \frac12 B(a,c,t)$, where
$$B(a,c,t) \defeq \int b(x,t) \eta(x,a,c)^2 \, dx \,.$$
The heuristic adopted in \cite{HZ1, HZ2}, essentially equivalent (see \cite{HZ3}) to the ``effective Lagrangian'' or ``collective coordinate method'' commonly applied in the physics literature, is the following:  the equations of motion for $a$, $c$ are approximately the Hamilton flow of $H\big|_M$ with respect to $\omega\big|_M$.  These equations are
$$
\left\{
\begin{aligned}
&\dot a = 4c^2 - \frac1{16}c^{-2} \partial_c B \\
&\dot c = \frac1{16}c^{-2}\partial_a B
\end{aligned}
\right.
$$
By Taylor expansion, these equations are approximately
$$
\left\{
\begin{aligned}
&\dot a = 4 c^2 - b(a) + O(h^2) \\
&\dot c = \frac13 c b'(a) + O(h^3)
\end{aligned}
\right.
$$
Note that the equations \eqref{E:ODEs-scaled} are the rescaled versions of these equations with the $O(h^2)$ and $O(h^3)$ error terms dropped. 

The first of the orthogonality conditions in \eqref{E:so} can be rewritten as $\omega(v, \partial_a \eta)=0$ and thus interpreted as symplectic orthogonality with respect to the $a$-direction on $M$.   The other symplectic orthogonality condition $0=\omega(v, \partial_c \eta) = \la v, \partial_x^{-1}\partial_c \eta\ra$  is not defined for general $H^1$ functions $v$ since $\partial_x^{-1}\partial_c \eta = (\tau(y)+y\theta(y))\big|_{y=c(x-a)}$, where $\tau(y)=2\tanh y$.  Thus, we drop this condition, although it must be replaced with some other condition that projects sufficiently far away from the kernel ($\spn \{ \partial_x \eta \}$) of the Hessian of the Lyapunov functional.  We select $\la v, (x-a) \eta \ra=0$ (i.e., the second equation in \eqref{E:so}) since it is a hypothesis in the Martel-Merle local virial identity (Lemma \ref{L:MM}).

\subsection{Numerics}

For the numerics, we restrict to time-independent potentials $b(x)=b_0(hx)$ and use the rescaled frame $X=hx$, $S=h^3t$, $V(X,S) = h^{-2}u(h^{-1}X,h^{-3}S)$, and $B(X) = h^{-2}b(h^{-1}X)=h^{-2}b_0(X)$.  Then $V$ solves the equation
$$\partial_S V = \partial_X(-\partial_X^2 V - 3V^2 +BV) \,,$$
with initial data $V_0(X) = \eta(X,A_0,C_0h^{-1})$.
Note that to examine the solution $u(x,t)$ on the time interval $0\leq t \leq Kh^{-1}$, we should examine $V(X,S)$ on the time interval $0\leq  S\leq Kh^2$.

As an example, we put $b_0(x)=8\sin x$ and take $A_0=2.5$, $C_0=1$ and $K=1$.  Then the width of the soliton is approximately the same width as the potential (when $h=1$), but note that the size of the potential is not small.  The results of numerical simulations for $h= 0.3,0.2,0.1$ are depicted in the Fig. \ref{F:1}.  There, plots are given depicting the rescaled solution $v(X,S)$ for each of these values of $h$.  In Fig. \ref{F:2}, we draw a comparison to the ODEs \eqref{E:ODEs-scaled}.    In each of the numerical simulations, we record the center of the soliton as $\tilde A_h(S)$ and the soliton scale as 
$$\tilde C_h(S) = \sqrt{ \frac{ \text{max. amp}(S)}{2}} \,.$$
That is, we fit the solution $V(X,S)$ to $\eta(X,\tilde A_h(S), \tilde C_h(S))$.
 Let $T=ht$ so that $S=h^2T$.  To convert into the $(X,T)$ frame of reference,  we plot $T$ versus $A_h(T)=\tilde A_h(h^{2}T)$ in the top plot of Fig. \ref{F:2} together with $A(T)$ solving \eqref{E:ODEs-scaled}.  In the bottom frame, we plot $T$ versus $C_h(T) = h \tilde C_h(h^{2}T)$ together with $C(T)$ solving \eqref{E:ODEs-scaled}.  We opted to only plot $h=0.2$ since the curves for $h=0.3,0.2,0.1$ were all rather close, producing a crowded figure.
Theorem \ref{T:main} predicts $O(h)$ convergence in both frames of Fig. \ref{F:2}.  

The numerical solution to the equation \eqref{E:pKdV} was produced using a \texttt{MATLAB} code based on the Fourier spectral/ETDRK4 scheme as presented in  Kassam-Trefethen \cite{KT}.  The ODEs \eqref{E:ODEs-scaled} were solved numerically using \texttt{ODE45} in \texttt{MATLAB}. 

\begin{figure}
\includegraphics[scale=0.55]{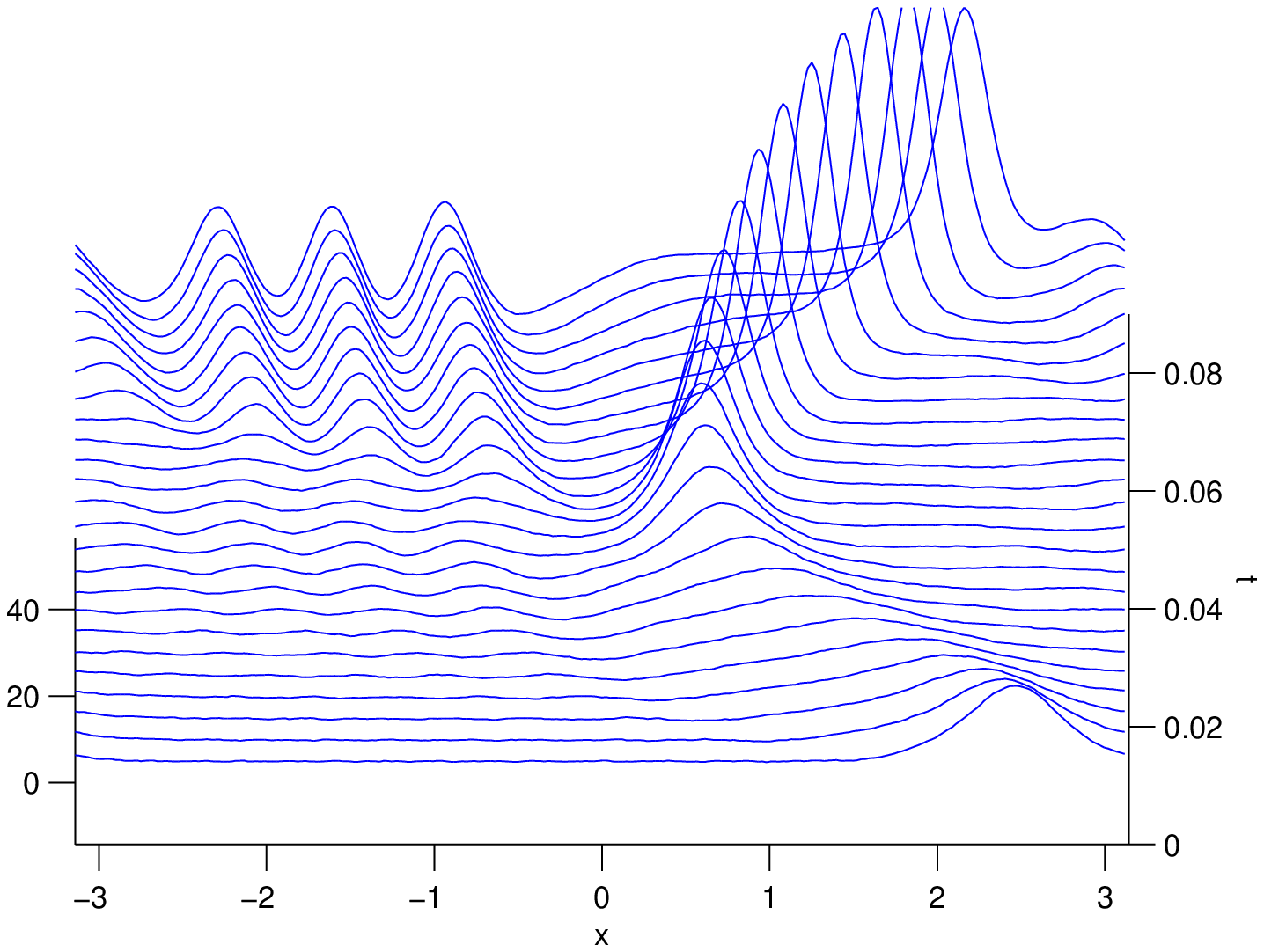}
\includegraphics[scale=0.55]{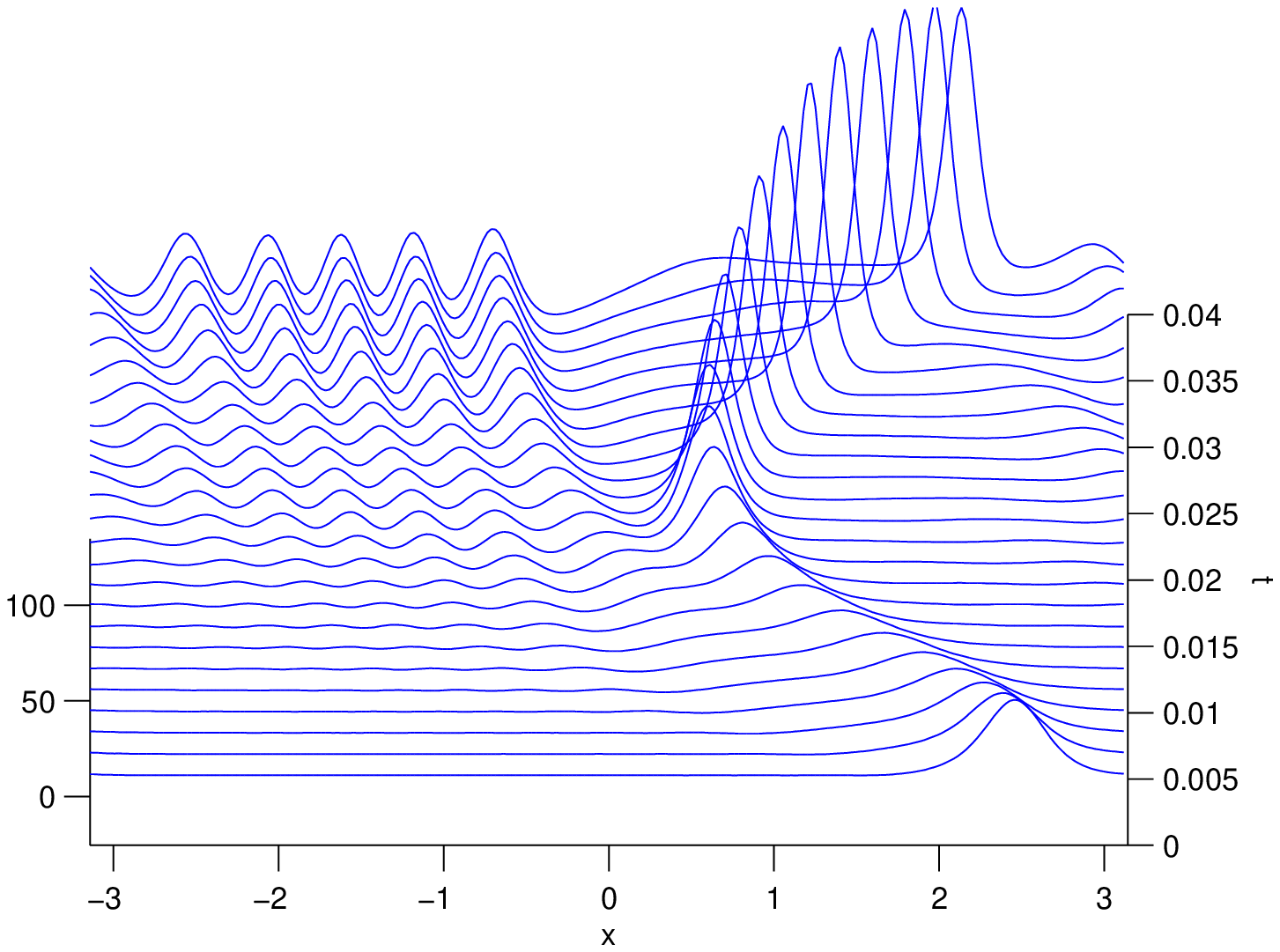}
\includegraphics[scale=0.55]{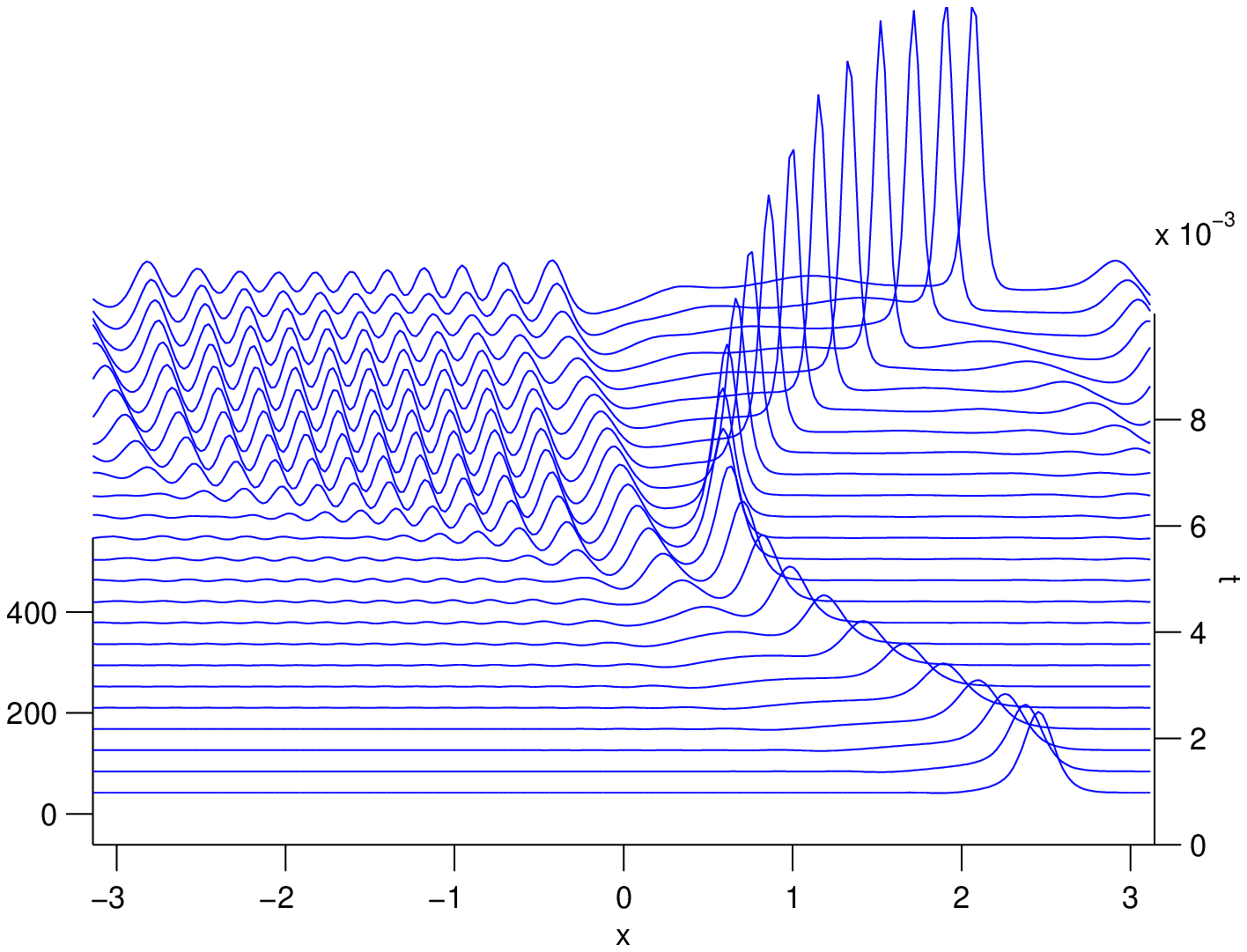}
\caption{The rescaled evolution $V(X,S)$ (see text) for $B(X)=h^{-2}b_0(X)=8h^{-2}\sin X$, $A_0=2.5$, $C_0=1$, on the time interval $0\leq S\leq h^2$.  The three frames are, respectively, $h=0.3$, $h=0.2$, and $h=0.1$.
\label{F:1}
}
\end{figure}

\begin{figure}
\includegraphics[scale=0.7]{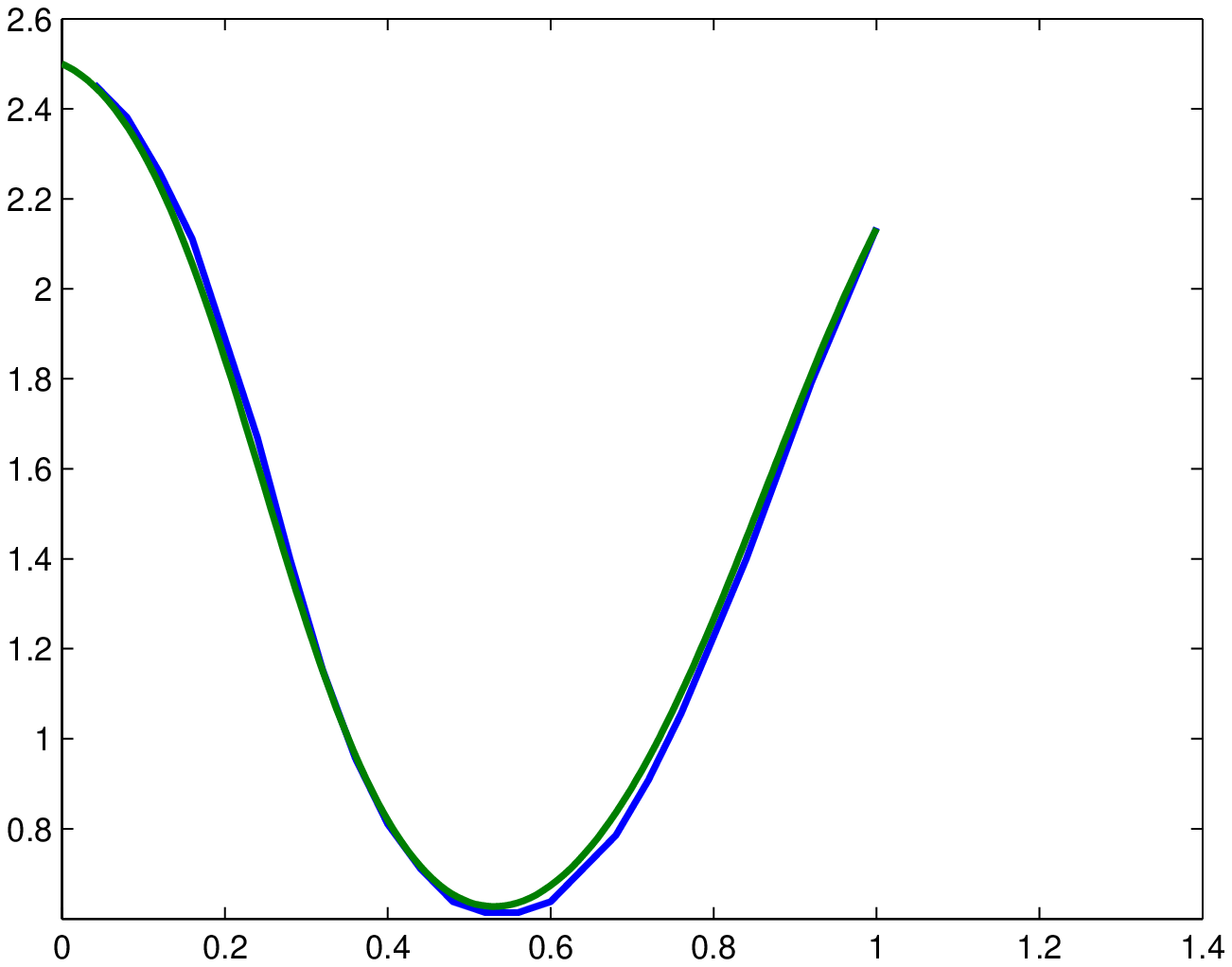}
\includegraphics[scale=0.7]{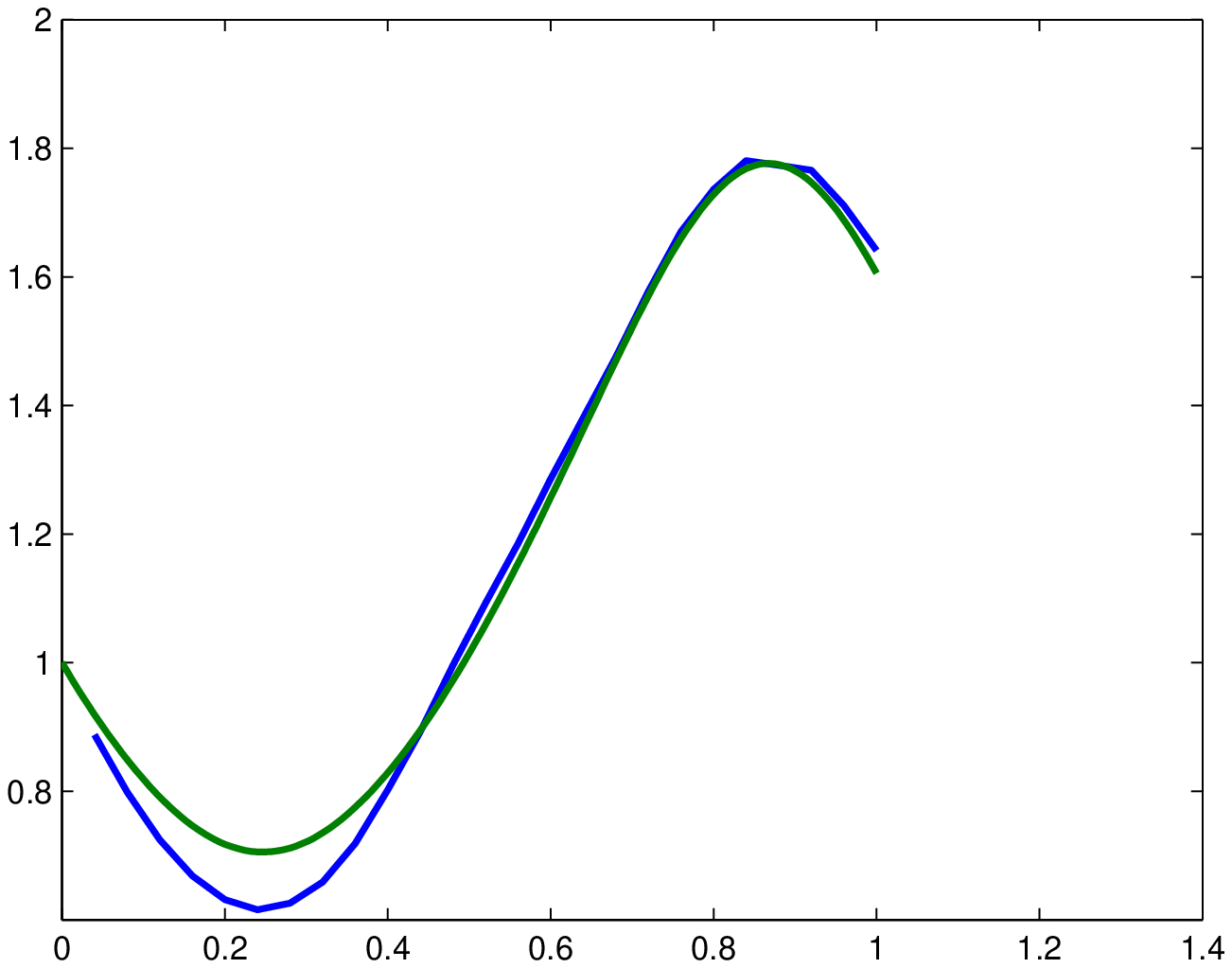}
\caption{For the simulations in Fig. \ref{F:1}, the position was recorded as $\tilde A_h(S)$ and the scale was recorded as $\tilde C_h(S)$; that is, the solution $v(X,S)$ was fitted to $\eta(X,\tilde A_h(S), \tilde C_h(S))$.
The top plot is $T$ versus $A_h(T)=\tilde A_h(h^2T)$ for $h=0.2$ (in blue) compared to the value of $A(T)$ obtained by solving the ODE system (in green).  
The bottom plot is $T$ versus $C_h(T)=h \tilde C_h(h^2T)$ for $h=0.2$ (in blue), compared to the value of $C(T)$ obtained from the ODE system (in green). 
\label{F:2}  
}
\end{figure}

\subsection{Relation to earlier and concurrent work}
\label{S:earlierwork}

Theorem 2 in Dejak-Sigal \cite{DS} states (roughly) that for potential $b(x,t) = \epsilon b(hx,ht)$, the error $\|w\|_{H^1} \lesssim \epsilon^{1/2}h^{1/2}$ can be achieved on the time-scale $t \lesssim (h+\epsilon^{1/2}h^{1/2})^{-1}$, and the equations of motion satisfy
$$
\left\{
\begin{aligned}
&\dot a = 4c^2-b(a) +O(\epsilon h)\\
&\dot c = O(\epsilon h)
\end{aligned}
\right.
$$
To reach the nontrivial dynamical time frame, one thus needs to take $\epsilon = h$ in their result.  With this selection for $\epsilon$, the $O(h^2)$ errors in the ODEs can be removed as in our result with the effect of at least preserving the error estimate for $w$ in $H^1$ at the $h^{1/2}$, rather than $h$ level.  But then the conclusion of their analysis is that the (small and slowly varying) potential has no significant effect on the dynamics.  We emphasize that in our case, we allow for $\epsilon = O(1)$ and thus can see dramatic effects on the motion of the soliton.

The paper Dejak-Sigal \cite{DS} is modeled upon earlier work by Fr\"ohlich-Gustafson-Jonsson-Sigal \cite{FGJS} for the NLS equation, which controlled the error via the Lyapunov functional employed in the orbital stability theory of Weinstein \cite{Wei}.  In \cite{HZ2}, we improved \cite{FGJS} by using the symplectic restriction interpretation as a guide in the analysis and introducing a correction term to the Lyapunov estimate.  A correction term is not as easily applied to the study of \eqref{E:pKdV} since the leading order inhomogeneity in the equation for $v$ generates a ``nonlocal'' solution.  To properly address the nonlocality of $v$, we use both the global $H^1$ estimate \eqref{E:v-global} as in \cite{FGJS, DS, HZ1, HZ2}, but also introduce the new local estimate \eqref{E:v-local}, which is proved using the local virial identity of Martel-Merle \cite{MM}.  We remark that our method does not use the integrable structure of the KdV equation, and we expect that our result will carry over to the perturbed $L^2$ subcritical gKdV-$p$ equation
 $$\partial_t u = - \partial_x (\partial_x^2 u + u^p - bu)$$
In the case $p=3$, i.e. the second symplectic orthogonality condition $\la v, \partial_x^{-1}\partial_c \eta\ra =0$ (where now $\theta(y) = \sqrt 2\sech y$ and $\eta(x,a,c)=c\eta(c(x-a))$) is well-defined for general $H^1$ functions $v$.  In this case, we are able to achieve stronger results by following the method of \cite{HZ2}, and even treat double solitons -- see \cite{HPZ}.

The concurrent work by Mu\~noz \cite{Munoz} considers the equation (specializing to the case $m=2$ in his paper to facilitate comparison)
\begin{equation}
\label{E:Munoz}
\partial_t v = \partial_x( -\partial_x^2 v + 4\lambda v - 3\alpha v^2) \,,
\end{equation}
where $\alpha(x)=\alpha_0(hx)$, with $\alpha_0(X)$ increasing monotonically from $\alpha(-\infty)=1$ to $\alpha(\infty)=2$, and $0\leq \lambda \leq \lambda_0 \defeq \frac35$ is constant, effectively corresponding to a moving frame of reference.  The equation \eqref{E:Munoz} is similar to our \eqref{E:pKdV} but not directly related to it through any known transformation.  His main theorem gives the existence of a solution $v(x,t)$  which asymptotically matches the soliton $\eta(x,a(t),1)$ as $t\to -\infty$ and matches the soliton $\frac12 \eta(x,a(t),c_\infty)$ as $t\to +\infty$ with error at most $h^{1/2}$ in $H^1_x$.  Here, $c_\infty$ is precisely given in terms of the solution to an algebraic equation (see (4.17) in his paper).  He presents this problem as more of an obstacle scattering problem with a careful analysis of ``incoming''  and ``outgoing'' waves and thus his priorities are different from ours.  

However, information from the ``interaction phase'' of his analysis can be extracted from the main body of his paper and compared with the results of our paper.  In the course of his analysis, he obtains effective dynamics (here $\lambda_0=\frac35$) for an \emph{approximate solution}
$$
\left\{
\begin{aligned}
&\dot a = c^2-\lambda \\
&\dot c = \frac25 c\left(c^2 - \frac{\lambda}{\lambda_0} \right) \frac{\alpha'(a)}{\alpha(a)} 
\end{aligned}
\right.
$$
He then shows that the approximate solution is comparable to a true solution in $H^1$ with accuracy $O(h^{1/2})$ (same as in our result) but only at the expense of a spatial shift for which he has the comparatively weak control of size $O(h^{-1})$.  In our analysis, we are able to achieve control of size $O(1)$ on the positional parameter $a(t)$.    At the technical level, we are gaining an advantage by using the local virial estimate in the interaction phase analysis while Mu\~noz carries out a more direct energy estimate.   Mu\~noz does apply the local virial estimate in his ``post-interaction'' analysis to achieve a convergence statement as $t\to +\infty$ with a remarkably precise scale estimate.

\subsection{Notation}

It is convenient to work in both direct (e.g. $\eta(x,a,c)$) and ``pulled-back'' coordinates (e.g $\theta(y)$).  Our convention is that successive letters are used to define functions related in this way.  Specifically,
\begin{itemize}
\item $\theta(y) = 2\sech y$ and $\eta(x,a,c) = c^2\theta(c(x-a))$.  
\item $\tau(y) = 2\tanh y$ and $\sigma(x,a,c) = c^2\tau(c(x-a))$.
\item $v(x,t) = 2c^2w(c(x-a),t)$
\item $\mathcal{L} = 4-\partial_y^2 - 6\theta$ and $\mathcal{K} = 4c^2- \partial_x^2 - 6\eta(\cdot,a,c)$.
\end{itemize}

\subsection{Outline of the paper}

In \S\ref{S:spectral}, we deduce some needed spectral properties of the operator $\mathcal{K}$ which are required to give the lower bound in the Lyapunov functional method (Cor. \ref{C:spec-bd}).  In \S\ref{S:orth}, we give the standard argument, via the implicit function theorem, that the parameters $a$ and $c$ can be adjusted so as to arrange that $v$ satisfies the orthogonality conditions \eqref{E:so} (Lemma \ref{L:ift}).  In \S\ref{S:decomposition}, we decompose the forcing term in the linearized equation into symplectically orthogonal and symplectically parallel components.  In \S\ref{S:parameters}, the orthogonality conditions are applied to obtain the equations for the parameters (Lemma \ref{L:param}).  These equations include error terms expressed in terms of the local-in-space norm $\|e^{-\epsilon|x-a|}v\|_{H^1}$.  In \S\ref{S:locvir}, an estimate on $\|e^{-\epsilon|x-a|}v\|_{L_T^2H^1}$ is obtained by the Martel-Merle local virial identity (Lemma \ref{L:locvir}).  In \S\ref{S:energy}, the estimates on $\|v\|_{L_T^\infty H_x^1}$ are obtained by the Lyapunov energy method (Lemma \ref{L:energy}).  The three key estimates (Lemmas \ref{L:param},  \ref{L:locvir}, \ref{L:energy}) are combined to give the proof of Theorem \ref{T:main} in \S\ref{S:proof}. 

\subsection{Acknowledgements}

Galina Perelman shared with me a set of notes illustrating how to apply the Martel-Merle local virial identity to this problem.  The present paper is essentially an elaboration of this note, and hence I am very much indebted to her generous assistance.  I thank also Maciej Zworski for initially proposing the problem, providing the numerical codes, and for helpful discussions.

I am partially supported by a Sloan fellowship and NSF grant DMS-0901582.

\section{Spectral properties of the linearized operator}
\label{S:spectral}

Recall that $\mathcal{L} = 4 - \partial_y^2 -6 \theta$.
Since $\theta(y) = 2\sech^2 y$, we see that we must consider the Schr\"odinger operator with P\"oschl-Teller potential 
$$A=-\partial_y^2 - \nu(\nu+1)\sech^2y$$
with $\nu =3$.  The spectral resolution of operators of the type $A$ is deduced via hypergeometric functions in the appendix of Guillop\'e-Zworski \cite{GZ}.  From this analysis, we obtain

\begin{lemma}[spectrum of $\mathcal{L}$]
The spectrum of $\mathcal{L}$ is $\{-5,0,3\} \cup [4,+\infty)$.  The $L^2$ normalized eigenfunctions corresponding to the first two eigenvalues are 
\begin{align*}
& \lambda_1 = -5  && f_1(y) = \frac{\sqrt{15}}{4}\sech^3y \\
& \lambda_0 = 0  && f_0(y) = \frac{\sqrt{15}}{2}\sech^2 y\tanh y=- \frac{\sqrt{15}}{8} \theta'(y)
\end{align*}
\end{lemma}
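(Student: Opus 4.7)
The plan is to reduce everything to the classical spectral analysis of the Pöschl--Teller operator already invoked in the text. Writing $\mathcal{L} = A + 4$ with $A = -\partial_y^2 - 12\sech^2 y$, we recognize $A$ as the Schrödinger operator with Pöschl--Teller potential and coupling $\nu(\nu+1) = 12$, i.e. $\nu = 3$. The hypergeometric-function calculation in the appendix of Guillopé--Zworski gives the full spectral resolution of $A$: three simple negative eigenvalues at $-9$, $-4$, $-1$ (with no embedded eigenvalues) and absolutely continuous spectrum filling $[0,\infty)$. Shifting by $+4$ yields $\sigma(\mathcal{L}) = \{-5, 0, 3\} \cup [4,\infty)$. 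I would also note that the essential spectrum part $[4,\infty)$ is immediate from Weyl's theorem, since $-12\sech^2 y$ is a relatively compact perturbation of $-\partial_y^2 + 4$; the genuine content one needs from Guillopé--Zworski is the exact count and location of the bound states below the continuum.

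For the two explicitly stated eigenfunctions I would verify the identities by direct substitution. Differentiating the soliton profile equation $\theta'' + 3\theta^2 = 4\theta$ once in $y$ gives $\theta''' + 6\theta\theta' = 4\theta'$, equivalently $\mathcal{L}\theta' = 0$. Since $\theta'(y) = -4\sech^2 y\tanh y$, this proves $\mathcal{L} f_0 = 0$ with $f_0 = \tfrac{\sqrt{15}}{2}\sech^2 y\tanh y = -\tfrac{\sqrt{15}}{8}\theta'$. For $f_1 \propto \sech^3 y$, a short computation gives $(\sech^3 y)'' = 9\sech^3 y - 12\sech^5 y$; combining with $-12\sech^2 y \cdot \sech^3 y = -12\sech^5 y$ one obtains $A(\sech^3 y) = -9\sech^3 y$, hence $\mathcal{L} f_1 = -5 f_1$.

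The normalizing constants are read off from the standard integrals $\int \sech^6 y \, dy = \tfrac{16}{15}$ and $\int \sech^4 y \, \tanh^2 y \, dy = \tfrac{4}{15}$, which give $\|f_1\|_{L^2} = \|f_0\|_{L^2} = 1$. No step presents a serious obstacle: the only point that is not a one-line computation is the nonexistence of additional bound states of $A$ inside $(-9,0)$ beyond $\{-4,-1\}$, which is precisely what the Guillopé--Zworski hypergeometric analysis delivers. Alternatively, one could produce the same conclusion by the factorization/ladder argument using the intertwiners $B_\nu = \partial_y + \nu\tanh y$, which satisfy $B_\nu^* B_\nu = A_\nu + \nu^2$ and $B_\nu B_\nu^* = A_{\nu-1} + \nu^2$ (where $A_\nu = -\partial_y^2 - \nu(\nu+1)\sech^2 y$), iterating from $\nu = 3$ down to $\nu = 0$ to generate and then exhaust the bound states.
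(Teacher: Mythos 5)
Your proposal is correct and follows the same route as the paper, which likewise reduces $\mathcal{L}-4$ to the P\"oschl--Teller operator with $\nu=3$ and cites the hypergeometric analysis in the appendix of Guillop\'e--Zworski for the bound states $-9,-4,-1$ and continuous spectrum $[0,\infty)$, then shifts by $4$. Your added direct verifications (that $\mathcal{L}\theta'=0$ follows from differentiating $\theta''+3\theta^2=4\theta$, that $A\,\sech^3 y=-9\sech^3 y$, and the normalization integrals $\int\sech^6=\tfrac{16}{15}$, $\int\sech^4\tanh^2=\tfrac{4}{15}$) all check out, and the SUSY ladder remark is a sound alternative to the hypergeometric computation for exhausting the bound states.
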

Denote by $E_j$ the corresponding eigenspaces and $P_{E_j}$ the corresponding projections (that is, the $L^2$ orthogonal projections and not the symplectic orthogonal projections).

\begin{lemma}
Suppose that $\la w, \theta\ra =0$ and $\la w, y\theta \ra =0$.  Then
\begin{equation}
\label{E:L2bd}
2\|w\|_{L^2}^2 \leq \la \mathcal{L}w,w\ra
\end{equation}
\end{lemma}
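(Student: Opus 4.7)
The plan is to decompose $w$ in the spectral basis of $\mathcal{L}$ from the previous lemma, apply the spectral gap on the positive-energy subspace, and then use the hypotheses to control the contributions of the nonpositive eigenspaces $E_{-5}$ and $E_0$. Write $w = \alpha_1 f_1 + \alpha_0 f_0 + w_\perp$ where $w_\perp \in \spn\{f_0,f_1\}^\perp$. Since the remainder of the spectrum of $\mathcal{L}$ is contained in $\{3\}\cup[4,\infty)$, the spectral theorem gives
\[
\la \mathcal{L}w, w \ra \;=\; -5\alpha_1^2 + \la \mathcal{L}w_\perp, w_\perp\ra \;\geq\; -5\alpha_1^2 + 3\|w_\perp\|_{L^2}^2.
\]
Combining this with the orthogonal decomposition $\|w\|_{L^2}^2 = \alpha_1^2 + \alpha_0^2 + \|w_\perp\|_{L^2}^2$, the desired inequality \eqref{E:L2bd} reduces to the single bound $\|w_\perp\|_{L^2}^2 \geq 7\alpha_1^2 + 2\alpha_0^2$.

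Next, I would exploit parity: $f_1,\theta$ are even while $f_0, y\theta$ are odd, so $\la f_1, y\theta\ra = 0$ and $\la f_0,\theta\ra = 0$, and the two orthogonality conditions therefore decouple into
\[
\alpha_1 \la f_1, \theta\ra = -\la w_\perp, \theta\ra, \qquad \alpha_0 \la f_0, y\theta\ra = -\la w_\perp, y\theta\ra.
\]
Since $w_\perp$ is orthogonal to both $f_0$ and $f_1$, I may replace $\theta$ (resp.\ $y\theta$) on the right-hand sides by its $L^2$-projection onto $\spn\{f_0,f_1\}^\perp$ before applying Cauchy--Schwarz, producing
\[
\alpha_1^2 \leq \Bigl(\tfrac{\|\theta\|_{L^2}^2}{\la f_1, \theta\ra^2} - 1\Bigr)\|w_\perp\|_{L^2}^2, \qquad \alpha_0^2 \leq \Bigl(\tfrac{\|y\theta\|_{L^2}^2}{\la f_0, y\theta\ra^2} - 1\Bigr)\|w_\perp\|_{L^2}^2.
\]

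All that is left is to compute four scalar integrals. The reduction formula $\int \sech^n y\,dy = \tfrac{n-2}{n-1}\int \sech^{n-2}y\,dy$ together with $\int\sech y\,dy = \pi$ handles $\|\theta\|_{L^2}^2$ and $\la f_1, \theta\ra$, an integration by parts against $\partial_y(\sech^4 y)$ evaluates $\la f_0, y\theta\ra$, and double differentiation of the Fourier transform $\mathcal{F}[\sech^2](\omega) = \pi\omega/\sinh(\pi\omega/2)$ at $\omega=0$ gives $\int y^2\sech^4 y\,dy = (\pi^2-6)/9$. Plugging everything in, the two ratios above evaluate to $4096/(405\pi^2)\approx 1.025$ and $4(\pi^2-6)/15\approx 1.033$, so $7\alpha_1^2 + 2\alpha_0^2 \leq \bigl(7(0.025) + 2(0.033)\bigr)\|w_\perp\|_{L^2}^2 \approx 0.24\,\|w_\perp\|_{L^2}^2$, comfortably beneath the target. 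The only subtle step is this numerical verification, since everything hinges on the two constraint ratios being sufficiently close to $1$; the spectral-decomposition strategy itself is standard Weinstein-type coercivity.
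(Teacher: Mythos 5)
Your argument is correct and is essentially the paper's proof: the paper also decomposes along the eigenfunctions $f_1,f_0$, uses parity to reduce each orthogonality condition to a single Cauchy--Schwarz bound on the corresponding coefficient, and verifies the same numerical constants ($\|h\|^2/\beta^2\approx 0.0247$ and $0.0319$, matching your two ``ratio minus one'' quantities). The only difference is organizational --- the paper splits $w$ into even and odd parts and proves two separate claims, whereas you carry both coefficients $\alpha_1,\alpha_0$ simultaneously and collect them into the single inequality $\|w_\perp\|_{L^2}^2\geq 7\alpha_1^2+2\alpha_0^2$ --- which changes nothing of substance.
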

\begin{proof}
Since $\mathcal{L}$ preserves parity, it suffices to separately prove:

\noindent\textit{Claim 1}.  If $w$ is even, $\|w\|_{L^2}=1$, and $\la w, \theta\ra =0$, then $\la \mathcal{L}w, w\ra \geq 2$.

\noindent\textit{Claim 2}.  If $w$ is odd, $\|w\|_{L^2}=1$, and $\la w, y\theta \ra =0$, then $\la \mathcal{L}w, w \ra \geq 2$.

\smallskip

We begin with the proof of Claim 1.  Since $w$ is even, $\la w, f_0\ra=0$.  Resolve $w$ as
$$w=\alpha f_1+g, \quad g\in (E_1+E_0)^\perp \,, \quad \alpha^2+\|g\|_{L^2}^2=1 \,.$$  
Resolve also
$$\theta = \beta f_1 +h, \quad h\in (E_1+E_0)^\perp \,, \quad \beta^2+\|h\|_{L^2}^2=\|\theta\|_{L^2}^2=\frac{16}{3} \,.$$   
We compute that
\begin{equation}
\label{E:spec1}
\beta = \la \theta, f_1\ra = \frac{3\sqrt{15}\pi}{16} \approx 2.28138 \,,
\end{equation}
from which it follows that
\begin{equation}
\label{E:spec2}
\|h\|_{L^2}^2 = \frac{16}{3} - \left( \frac{3\sqrt{15}\pi}{16}\right)^2 \approx 0.128659 \,.
\end{equation}
We then have
$$0=\la w, \theta \ra = \alpha\beta + \la g, h \ra \,,$$
which using \eqref{E:spec1}, \eqref{E:spec2}, and $\|g\|_{L^2} \leq 1$, implies
$$|\alpha| \leq \frac{1}{\beta} \|g\|_{L^2}\|h\|_{L^2} \leq 0.157226 \,.$$
By the spectral theorem,
$$\la \mathcal{L}w,w\ra \geq 3 \|g\|_{L^2}^2 - 5\alpha^2 = 3(1-\alpha^2) - 5\alpha^2 = 3-8\alpha^2 \geq 2 \,.$$

Next, we prove Claim 2.  Since $w$ is odd, $\la w, f_1 \ra =0$.   Resolve $w$ as
$$w = \alpha f_0 + g \,, \quad g\in (E_1+E_0)^\perp \,, \quad \alpha^2 + \|g\|_{L^2}^2 = 1 \,.$$
Resolve also
$$y\theta = \beta f_0 + h \,, \qquad h \in (E_1+E_0)^\perp \,, \quad \beta^2 + \|h\|_{L^2}^2 = \|y\theta\|_{L^2}^2 = \frac{4}{9}(\pi^2-6) \,.$$
We compute that
\begin{equation}
\label{E:spec3}
\beta = \la y\theta, f_0 \ra =  \sqrt{ \frac{5}{3}} \approx 1.29099 \,,
\end{equation}
from which it follows that
\begin{equation}
\label{E:spec4}
\|h\|_{L^2}^2 = \frac{4}{9}(\pi^2-6)- \beta^2 \approx 0.0531575\,.
\end{equation}
We then have
$$0 = \la w, y\theta \ra = \alpha \beta + \la g, h \ra \,, $$
which, using \eqref{E:spec3}, \eqref{E:spec4}, and $\|g\|_{L^2} \leq 1$ implies
$$|\alpha| \leq \frac{1}{\beta} \|g\|_{L^2}\|h\|_{L^2} \leq 0.17859  \,.$$
By the spectral theorem,
$$\la \mathcal{L}w,w\ra \geq 3 \|g\|_{L^2}^2 = 3 -3\alpha^2 \geq 2 \,.$$
\end{proof}

\begin{corollary}
Suppose that 
\begin{equation}
\label{E:so-w}
\la w, \theta \ra =0 \quad \text{and} \quad \la w, y\theta \ra =0 \,.
\end{equation}
Then
$$\tfrac{2}{11}\|w\|_{H^1}^2  \leq  \la \mathcal{L}w, w \ra \,.$$
\end{corollary}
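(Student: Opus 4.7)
The plan is to upgrade the $L^2$ lower bound \eqref{E:L2bd} to an $H^1$ bound by combining it with a crude estimate that separately controls the kinetic term $\|w'\|_{L^2}^2$. The orthogonality hypothesis \eqref{E:so-w} will enter only through \eqref{E:L2bd}; the second ingredient is orthogonality-free.

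First I would expand
\begin{equation*}
\langle \mathcal{L}w, w\rangle = \|w'\|_{L^2}^2 + 4\|w\|_{L^2}^2 - 6\int \theta\, w^2 \, dy,
\end{equation*}
and use the trivial pointwise bound $\theta(y) = 2\sech^2 y \leq 2$ to obtain $-6\int \theta w^2 \geq -12\|w\|_{L^2}^2$, which yields the (orthogonality-free) estimate
\begin{equation*}
\langle \mathcal{L}w,w\rangle \;\geq\; \|w'\|_{L^2}^2 - 8\|w\|_{L^2}^2. \tag{$\ast$}
\end{equation*}

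Next I would form a convex combination of \eqref{E:L2bd} with weight $\alpha\in(0,1)$ and $(\ast)$ with weight $1-\alpha$, giving
\begin{equation*}
\langle \mathcal{L}w,w\rangle \;\geq\; (10\alpha - 8)\|w\|_{L^2}^2 + (1-\alpha)\|w'\|_{L^2}^2.
\end{equation*}
To extract a clean multiple of $\|w\|_{H^1}^2 = \|w\|_{L^2}^2 + \|w'\|_{L^2}^2$, I would equate the two coefficients. The equation $10\alpha - 8 = 1 - \alpha$ forces $\alpha = 9/11$, and then both coefficients equal $2/11$, which is precisely the claimed constant.

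There is no real obstacle; the spectral work has already been done in the preceding lemma and this corollary is just an arithmetic balance between $L^2$-coercivity and the elementary bound $\theta \leq 2$. The choice $\alpha = 9/11$ is exactly the unique weight that simultaneously extracts positivity in $\|w\|_{L^2}^2$ from \eqref{E:L2bd} and positivity in $\|w'\|_{L^2}^2$ from $(\ast)$ with equal weight; using any weaker pointwise bound on $\theta$ would give the same shape of estimate with a smaller constant in place of $2/11$.
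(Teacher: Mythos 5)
Your proposal is correct and is essentially the paper's own argument: the paper likewise expands $\la \mathcal{L}w,w\ra$, uses $\theta\le 2$ to get $\|\partial_y w\|_{L^2}^2 \le \la \mathcal{L}w,w\ra + 8\|w\|_{L^2}^2$, and then adds $\tfrac92$ times \eqref{E:L2bd}. Your convex combination with weight $\alpha=9/11$ is the same linear combination up to an overall factor of $\tfrac{11}{2}$, so there is nothing genuinely different here.
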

\begin{proof}
By integration by parts, 
$$ \la \mathcal{L} w, w \ra = 4\|w\|_{L^2}^2 + \|\partial_x w\|_{L^2}^2 - 6\int \theta w^2 $$
from which we obtain
$$\|\partial_x w \|_{L^2}^2 \leq \la \mathcal{L}w, w\ra + 8\|w\|_{L^2}^2$$
Adding to this estimate $\frac{9}{2} \times$ the estimate \eqref{E:L2bd}, we obtain the claim.
\end{proof}

Of course the above properties of $\mathcal{L}$ can be converted to properties of $\mathcal{K}$, where
$$\mathcal{K} = 4c^2 - \partial_x^2 - 6\eta(\cdot, a,c) \,,$$
by scaling and translation.  In particular, we have

\begin{corollary}
\label{C:spec-bd}
Suppose that 
\begin{equation}
\label{E:so-v}
\la v, \eta(\cdot,a,c) \ra =0 \quad \text{and} \quad \la v, (x-a) \eta(\cdot,a,c) \ra =0 \,.
\end{equation}
Then
$$\|v\|_{H^1}^2  \lesssim \la \mathcal{K} v, v \ra \,.$$
where the implicit constant depends on $c$.
\end{corollary}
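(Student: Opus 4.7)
The plan is to reduce to the previous corollary for $\mathcal{L}$ via the change of variables already signaled in the statement ("by scaling and translation"). Specifically, I would set $w(y) \defeq v(a+y/c)$, so that $v(x) = w(c(x-a))$, which is just a translation by $a$ followed by a dilation by $c$.

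First, I would track how the operator $\mathcal{K}$ conjugates under this substitution. Using $\eta(x,a,c) = c^2 \theta(c(x-a))$ together with the chain rule, one checks that $(\mathcal{K}v)(x) = c^2(\mathcal{L}w)(c(x-a))$. Changing variables in $\int (\mathcal{K}v)v\,dx$ then gives $\la \mathcal{K}v, v\ra = c\,\la \mathcal{L}w, w\ra$.

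Next, I would translate the orthogonality conditions. A direct change of variables shows $\la v, \eta(\cdot,a,c)\ra = c\,\la w, \theta\ra$ and $\la v, (x-a)\eta(\cdot,a,c)\ra = c^{-1}\la w, y\theta\ra$, so the hypotheses \eqref{E:so-v} on $v$ are exactly equivalent to \eqref{E:so-w} for $w$, and the preceding corollary then yields $\tfrac{2}{11}\|w\|_{H^1}^2 \leq \la \mathcal{L}w, w\ra$.

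Finally, I would convert back to the $v$ norm. The same change of variables gives $\|v\|_{L^2}^2 = c^{-1}\|w\|_{L^2}^2$ and $\|\partial_x v\|_{L^2}^2 = c\,\|\partial_y w\|_{L^2}^2$, so $\|v\|_{H^1}^2 \leq \max(c, c^{-1})\|w\|_{H^1}^2$. Combining this with the previous two displays gives $\|v\|_{H^1}^2 \lesssim \max(c, c^{-1}) \cdot c^{-1} \la \mathcal{K}v, v\ra$, an implicit constant depending only on $c$, as claimed. There is no essential obstacle here: the content lies entirely in the preceding corollary for $\mathcal{L}$, and the present argument is just careful bookkeeping of the scaling weights.
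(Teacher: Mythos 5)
Your argument is correct and is exactly the scaling-and-translation reduction the paper has in mind (the paper simply asserts the corollary follows from the $\mathcal{L}$ result ``by scaling and translation'' and gives no further detail). The only quibble is a constant: with $v(x)=w(c(x-a))$ one gets $\la v,(x-a)\eta\ra = \la w, y\theta\ra$ exactly, not $c^{-1}\la w,y\theta\ra$, but since $c>0$ this is immaterial to the equivalence of the orthogonality conditions and to the final bound.
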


\section{Orthogonality conditions}
\label{S:orth}

We next show by a standard argument that the parameters $(a,c)$ can be tweaked to achieve the orthogonality conditions \eqref{E:so}.

\begin{lemma}
\label{L:ift}
If $\delta \leq \tilde c \leq \delta^{-1}$, there exist constants $\epsilon>0$, $C>0$ such that the following holds.  If $u=\eta(\cdot, \tilde a, \tilde c) + \tilde v$ with $\|\tilde v\|_{H^1} \leq \epsilon$, then there exist unique $a$, $c$ such that 
$$|a-\tilde a| \leq C\|\tilde v\|_{H^1} \,, \quad |c - \tilde c| \leq C\|\tilde v\|_{H^1}$$
and $v \defeq u - \eta(\cdot,a,c)$ satisfies
$$\la v, \eta \ra =0 \quad \text{and} \quad \la v, (x-a) \eta \ra=0 \,.$$
\end{lemma}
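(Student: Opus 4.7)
Proof proposal for Lemma \ref{L:ift}. The plan is a standard quantitative implicit function theorem argument applied to the map $F:\mathbb{R}\times(0,\infty)\to\mathbb{R}^2$ defined by
\begin{equation*}
F(a,c) \defeq \bigl(\la u-\eta(\cdot,a,c),\eta(\cdot,a,c)\ra,\; \la u-\eta(\cdot,a,c),(x-a)\eta(\cdot,a,c)\ra\bigr).
\end{equation*}
Observe that $F(\tilde a,\tilde c)=(\la \tilde v,\eta(\cdot,\tilde a,\tilde c)\ra,\la \tilde v,(x-\tilde a)\eta(\cdot,\tilde a,\tilde c)\ra)$, and by Cauchy-Schwarz together with the decay of $\eta$ and $(x-\tilde a)\eta$, this is bounded by $C(\tilde c)\|\tilde v\|_{H^1}$.

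The heart of the argument is to compute the Jacobian $DF(\tilde a,\tilde c)$ evaluated at the unperturbed profile $u=\eta(\cdot,\tilde a,\tilde c)$ (so that the second term in each partial derivative vanishes). Using $\partial_a\eta=-\partial_x\eta$ and integrations by parts, I would verify
\begin{align*}
\partial_a F_1 &= -\la \partial_a\eta,\eta\ra = \tfrac{1}{2}\partial_x\la\eta^2\ra_{\text{pointwise}} \to 0, \\
\partial_c F_1 &= -\la \partial_c\eta,\eta\ra = -\tfrac{3}{2}c^2\!\int\theta^2\,dy, \\
\partial_a F_2 &= -\la \partial_a\eta,(x-a)\eta\ra+\la u-\eta,-\eta\ra = \tfrac{1}{2}\!\int\eta^2\,dx, \\
\partial_c F_2 &= -\la \partial_c\eta,(x-a)\eta\ra = 0,
\end{align*}
where the vanishing of $\partial_a F_1$ follows from the total derivative of $\eta^2$, and the vanishing of $\partial_c F_2$ is by parity (both $\partial_c\eta$ and $\eta$ are even in $x-a$, while $x-a$ is odd). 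Thus $DF(\tilde a,\tilde c)$ is anti-diagonal with nonzero off-diagonal entries depending only on $\tilde c$, hence invertible with operator norm and inverse norm bounded uniformly for $\tilde c\in[\delta,\delta^{-1}]$.

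Having verified the invertibility of $DF$ with uniform bounds, I would apply the quantitative implicit function theorem (or equivalently a Banach fixed-point / Newton-iteration argument on the map $(a,c)\mapsto(\tilde a,\tilde c)-DF(\tilde a,\tilde c)^{-1}F(a,c)$) to produce, for $\|\tilde v\|_{H^1}\leq\epsilon$ small enough, a unique $(a,c)$ in a ball of radius $C\|\tilde v\|_{H^1}$ around $(\tilde a,\tilde c)$ with $F(a,c)=0$. Uniformity of $\epsilon$ and $C$ in $\tilde a$ is immediate from the translation invariance of both $F$ and the $H^1$ norm; uniformity in $\tilde c\in[\delta,\delta^{-1}]$ follows from continuous dependence of the Jacobian on $\tilde c$ on a compact interval.

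The main (minor) obstacle is purely bookkeeping: one must check that the second-order terms in the Taylor expansion of $F(a,c)$ around $(\tilde a,\tilde c)$, which involve derivatives of $\eta(\cdot,a,c)$ in $a$ and $c$ paired against both $\eta$ and $\tilde v$, are Lipschitz with constants that are uniform over $\tilde c\in[\delta,\delta^{-1}]$ and independent of $\tilde a$. This is routine given the explicit $c^2\theta(c(x-a))$ form of $\eta$ and the exponential decay of $\theta$ and its derivatives, so no genuine difficulty is expected.
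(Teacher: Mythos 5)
Your proposal is correct and follows essentially the same route as the paper: define the map $(a,c)\mapsto(\la u-\eta,\eta\ra,\la u-\eta,(x-a)\eta\ra)$, check that its Jacobian at the unperturbed point is anti-diagonal (the diagonal entries vanish by the total-derivative and parity observations you make) with off-diagonal entries of size $c^2$ and $c^3$, hence uniformly invertible for $c\in[\delta,\delta^{-1}]$, and invoke the implicit function theorem. The paper states this more tersely and does not spell out the quantitative uniformity in $\tilde a$ and $\tilde c$ that you address; your minor sign discrepancies in the Jacobian entries are immaterial since only nondegeneracy is used.
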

\begin{proof}
Define a map $\Phi: H^1 \times \mathbb{R} \times \mathbb{R}^+ \to \mathbb{R}^2$ by
$$\Phi(u,a,c) = 
\begin{bmatrix}
\la u - \eta(\cdot,a,c) , \eta \ra \\
\la u - \eta(\cdot,a,c) , (x-a) \eta \ra
\end{bmatrix}
$$
The derivative of $\Phi$ with respect to $(a,c)$ at the point $(\eta(\cdot,\tilde a,\tilde c), \tilde a, \tilde c)$ is 
$$(D_{a,c} \Phi) (\eta(\cdot, \tilde a, \tilde c), \tilde a, \tilde c) = 
-\begin{bmatrix}
\la \partial_a\eta, \eta\ra 
& \la \partial_c \eta, \eta \ra \\
\la \partial_a\eta, (x-a) \eta \ra 
& \la \partial_c \eta, (x-a) \eta \ra
\end{bmatrix}
 =
\begin{bmatrix}
0 & 8c^2 \\
\frac{8}{3}c^3 & 0
\end{bmatrix} \,,
$$ 
which is nondegenerate.  By the implicit function theorem, the equation $\Phi(u,a,c)=0$ can be solved for $(a,c)$ in terms of $u$ in a neighborhood of $\eta(\cdot, \tilde a, \tilde c)$.
\end{proof}

\section{Decomposition of the flow}
\label{S:decomposition}

Since we will model $u=\eta(\cdot,a,c)+v$ and $u$ solves \eqref{E:pKdV}, we compute that $v$ solves
\begin{align}
\notag
\partial_t v &= -\partial_x(\partial_x^2 v + 6\eta v -b v + 3v^2) + F_0 \\
\label{E:v} 
&= \partial_x \mathcal{K}v - 4c^2 \partial_x v + \partial_x(bv) -3\partial_x v^2 +F_0
\end{align}
where
$$F_0 = - (\dot a - 4c^2)\partial_a \eta   - \dot c \partial_c \eta + \partial_x(b\eta) \,.$$
Decompose $F_0=F_\|+F_\perp$, where $F_\|$ is symplectically parallel to $M$ and $F_\perp$ is symplectically orthogonal to $M$.   Explicitly, we have
\begin{align*}
&F_\| = \left(-(\dot a-4c^2) - \frac1{16c^2} \partial_c B \right) \partial_a \eta + \left( -\dot c + \frac1{16c^2} \partial_a B \right) \partial_c \eta \\
&F_\perp = \frac1{16c^2} \partial_c B  \; \partial_a \eta - \frac1{16c^2} \partial_a B  \; \partial_c \eta + \partial_x(b\eta) 
\end{align*}
By Taylor expansion we obtain $F_\perp = (F_\perp)_0 +O(h^2)$, where
$$(F_\perp)_0 = \frac13 c^2 b'(a) \; (\theta(y)+2y\theta'(y))\big|_{y=c(x-a)} \,.$$
By definition of $F_\perp$, we have $\la F_\perp, \partial_x^{-1}\partial_a\eta\ra =0$ and $\la F_\perp, \partial_x^{-1}\partial_c\eta\ra =0$ , which must then hold at every order in $h$; in particular, they hold for $(F_\perp)_0$.  Note that by parity $(F_\perp)_0$ in addition satisfies $\la (F_\perp)_0, (x-a) \eta\ra=0$, although this is not expected to hold for $F_\perp$ at all orders.

It follows that
\begin{equation}
\label{E:F0-est}
\|e^{\epsilon |x-a|} F_0\|_{H_x^1} \lesssim |\dot a - 4c^2 - b(a)| + |\dot c - \tfrac13 cb'(a)| + h\,.
\end{equation}

\section{Equations for the parameters}
\label{S:parameters}

\begin{lemma}
\label{L:param}
Suppose that we are given $b_0\in C_c^\infty(\mathbb{R}^2)$ and $\delta>0$.  (Implicit constants below depend only on $b_0$ and $\delta$.)
Suppose that $\|v\|_{H_x^1} \ll 1$, $v$ solves \eqref{E:v} and satisfies \eqref{E:so}, and $\delta \leq c \leq \delta^{-1}$.  Then
\begin{equation}
\label{E:c}
|\dot c -\tfrac13cb'(a)| \lesssim h\|e^{-\epsilon |x-a|}v\|_{H^1} + \|e^{-\epsilon |x-a|}v\|_{H^1}^2 + h^2
\end{equation}
and
\begin{equation}
\label{E:a}
\left|\dot a - 4c^2 +b(a) + \frac{\la \partial_x\mathcal{K}v, (x-a)\eta\ra}{\la \partial_x\eta, (x-a)\eta\ra}\right| \lesssim h\|e^{-\epsilon |x-a|}v\|_{H^1}  +\|e^{-\epsilon |x-a|}v\|_{H^1}^2+ h^2 \,.
\end{equation}
\end{lemma}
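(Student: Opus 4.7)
The plan is to differentiate each of the orthogonality identities \eqref{E:so} with respect to time, substitute the equation \eqref{E:v} for $\partial_t v$, and solve the resulting nearly diagonal $2\times 2$ linear system for the modulation quantities $\tilde\mu_1 \defeq \dot a - 4c^2 + b(a)$ and $\tilde\mu_2 \defeq \dot c - \tfrac13 c b'(a)$.

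Pairing $\partial_t v$ against $\eta$, the crucial algebraic inputs are $\mathcal{K}\partial_x\eta = 0$ (so $\la\partial_x\mathcal{K}v,\eta\ra = 0$ by self-adjointness of $\mathcal{K}$), $\la\partial_a\eta,\eta\ra = 0$, $\la\partial_c\eta,\eta\ra = 8c^2$, and the Taylor expansion of $b$ about $a$ combined with the parity $\int(x-a)\eta^2 = 0$, which gives $\la\partial_x(b\eta),\eta\ra = \tfrac{8c^3}{3}b'(a) + O(h^3)$. The apparently dangerous term $-\la bv,\partial_x\eta\ra$ is split as $-b(a)\la v,\partial_x\eta\ra + O(h\|e^{-\epsilon|x-a|}v\|_{H^1})$ via $b(x) = b(a) + O(h|x-a|)$, and the $b(a)$-piece then combines with $(4c^2 - \dot a)\la v,\partial_x\eta\ra$ (using $\partial_a\eta = -\partial_x\eta$) to form $-\tilde\mu_1\la v,\partial_x\eta\ra$, which is higher order once $\tilde\mu_1$ is controlled. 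Collecting yields
$$(8c^2 - \la v,\partial_c\eta\ra)\tilde\mu_2 = -\tilde\mu_1\la v,\partial_x\eta\ra + O(h\|e^{-\epsilon|x-a|}v\|_{H^1}) + O(\|e^{-\epsilon|x-a|}v\|_{H^1}^2) + O(h^3).$$

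Pairing $\partial_t v$ against $(x-a)\eta$ proceeds analogously, with the extra contribution $-\dot a\la v,\eta\ra$ from $\partial_t(x-a) = -\dot a$ killed by the first orthogonality. The analogous inputs are $\la\partial_a\eta,(x-a)\eta\ra = \tfrac{8c^3}{3}$, $\la\partial_c\eta,(x-a)\eta\ra = 0$ (by parity in $y = c(x-a)$), and $\la\partial_x(b\eta),(x-a)\eta\ra = -\tfrac{8b(a)c^3}{3} + O(h^2)$. The decisive structural difference from the first orthogonality is that $\la\partial_x\mathcal{K}v,(x-a)\eta\ra$ does \emph{not} vanish and must be retained -- it becomes the explicit correction appearing in \eqref{E:a}. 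The term $\la bv,\eta\ra$ is killed by the first orthogonality after Taylor expansion, and $-\la bv,(x-a)\partial_x\eta\ra$ combines with the remaining $(x-a)\partial_x\eta$ pairings into $-\tilde\mu_1\la v,(x-a)\partial_x\eta\ra$. The resulting identity is
$$\bigl(\tfrac{8c^3}{3} + \la v,(x-a)\partial_x\eta\ra\bigr)\tilde\mu_1 = \la\partial_x\mathcal{K}v,(x-a)\eta\ra + O(h\|e^{-\epsilon|x-a|}v\|_{H^1}) + O(\|e^{-\epsilon|x-a|}v\|_{H^1}^2) + O(h^2),$$
from which \eqref{E:a} follows upon dividing and recognizing $-\tfrac{8c^3}{3} = \la\partial_x\eta,(x-a)\eta\ra$. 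The two displayed relations form a linear system for $(\tilde\mu_1,\tilde\mu_2)$ whose coefficient matrix is a small perturbation of $\operatorname{diag}(\tfrac{8c^3}{3},8c^2)$ and is hence invertible; inversion propagates the right-hand side estimates to both modulation quantities, yielding \eqref{E:c}.

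I expect the main obstacle to be error bookkeeping: the orthogonality conditions \eqref{E:so} must be applied at precisely the right moments to upgrade naively $O(\|e^{-\epsilon|x-a|}v\|_{H^1})$ remainders -- arising from $b(a)$-pieces with $b(a) = O(1)$ -- into the advertised $O(h\|e^{-\epsilon|x-a|}v\|_{H^1})$. A related subtlety is that the parity of $\eta$ in the Taylor expansion of $b$ gains one extra power of $h$ in the $\dot c$ computation (yielding native error $O(h^3)$, safely inside the stated $h^2$) but not in the $\dot a$ computation, which is genuinely $O(h^2)$.
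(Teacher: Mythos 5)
Your proposal is correct and is essentially the paper's own argument: differentiate the two orthogonality conditions, substitute \eqref{E:v}, exploit $\mathcal{K}\partial_x\eta=0$, the parity identities for the inner products against $\eta$ and $(x-a)\eta$, and retain $\la \partial_x\mathcal{K}v,(x-a)\eta\ra$ as the explicit correction in \eqref{E:a}; working directly with $F_0$ instead of the $F_\|+F_\perp$ split is only a bookkeeping difference. One point in your favor: in the $\la v,\eta\ra$ computation the paper asserts that the terms $-4c^2\la\partial_x v,\eta\ra$, $b(a)\la v,\eta'\ra$, and $\dot a\la v,\partial_a\eta\ra$ each vanish, which would require the unimposed condition $\la v,\partial_x\eta\ra=0$; your grouping of these into $(\dot a-4c^2+b(a))\la\partial_x v,\eta\ra$, controlled a posteriori by \eqref{E:a}, is the correct handling and mirrors what the paper itself does for the $(x-a)\eta$ pairing.
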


\begin{proof}
We first work with the orthogonality condition $\la v, \partial_x^{-1}\partial_a \eta \ra =0$ to obtain \eqref{E:c}.  Applying $\partial_t$ to this orthogonality condition, we obtain
$$0 = \la \partial_tv , \eta(\cdot,a,c) \ra + \la v, \partial_t \eta(\cdot,a,c) \ra \,.$$
Substituting the equation for $v$ and the relation $\partial_t \eta = \dot a \partial_a \eta + \dot c \partial_c \eta$, we obtain
$$0 = 
\begin{aligned}[t]
&\la \partial_x \mathcal{K}v, \eta \ra  - 4c^2 \la \partial_x v, \eta \ra +\la \partial_x(bv), \eta\ra - 3\la \partial_x v^2, \eta \ra 
&& \leftarrow \text{I}+\text{II}+\text{III}+\text{IV}\\
&+ \la F_\|, \eta \ra + \la F_\perp, \eta \ra + \dot a \la v, \partial_a \eta \ra + \dot c \la v, \partial_c \eta \ra 
&& \leftarrow \text{V}+\text{VI}+\text{VII}+\text{VIII}
\end{aligned}
$$
We have $\text{I}=0$ and $\text{II}=0$.  Next, we calculate 
\begin{align*}
\text{III} &= \la \partial_x(bv),\eta\ra = - \la bv, \eta' \ra = b(a) \la v, \eta' \ra + O(h)\|e^{-\epsilon |x-a|} v\|_{H^1}\\
 &=  O(h)\|e^{-\epsilon |x-a|} v\|_{H^1}
\end{align*}
We easily obtain $|\text{IV}| \lesssim \|e^{-\epsilon |x-a|}v\|_{H_x^1}^2$.  Next,
\begin{align*}
\text{V} &= \la F_\|, \eta \ra = \left( -\dot c + \frac{1}{16c^2}\partial_a B\right) \la \partial_c \eta, \eta \ra \\
&= - \left( -\dot c + \frac{1}{16c^2}\partial_a B\right) \la \partial_c \eta, \partial_x^{-1}\partial_a \eta\ra \\
&= 8c^2 \left( - \dot c + \frac{1}{16c^2}\partial_a B \right) \,,
\end{align*}
from which it follows that
$$\text{V} = -8c^2(\dot c - \tfrac13cb'(a)) + O(h^2) \,.$$
Next, we have $\text{VI}=0$ and $\text{VII}=0$.  Finally,
$$|\text{VIII}| \lesssim |\dot c -\tfrac13 cb'(a)| \|e^{-\epsilon |x-a|}v\|_{H_x^1} + h\|e^{-\epsilon |x-a|}v\|_{H_x^1} \,.$$ 
Using that $\|v\|_{H_x^1}\ll 1$, we obtain \eqref{E:c}.  

To establish \eqref{E:a}, we apply $\partial_t$ to $\la v, (x-a) \eta \ra =0$ to obtain
$$0 = \la \partial_t v , (x-a) \eta \ra + \la v, \partial_t[(x-a) \eta] \ra$$
Substituting the equation \eqref{E:v} for $v$ and the relation $\partial_t \eta = \dot a \partial_a \eta + \dot c \partial_c \eta$, we obtain
$$0 = 
\begin{aligned}[t]
&\la \partial_x \mathcal{K}v, (x-a)\eta \ra  
- 4c^2 \la \partial_x v, (x-a)\eta \ra 
+ \la \partial_x(bv), (x-a)\eta\ra 
&& \leftarrow \text{I}+\text{II}+\text{III} \\
&- 3\la \partial_x v^2, (x-a)\eta \ra 
+ \la F_\|, (x-a)\eta \ra 
+ \la F_\perp, (x-a)\eta \ra 
&&\leftarrow \text{IV} + \text{V}+\text{VI} \\
&+ \dot a \la v, \partial_a [(x-a)\eta] \ra + \dot c \la v, (x-a)\partial_c \eta \ra 
&&\leftarrow \text{VII}+\text{VIII}
\end{aligned}
$$ 
Note that we do \emph{not} have $\text{I}=0$.  We would have $\text{I}=0$ if we were working with the orthogonality condition $\la v, \partial_x^{-1}\partial_c \eta \ra=0$, but as explained previously, this condition cannot be imposed on $v$ via the method of Lemma \ref{L:ift}, and even if it could, would not give the coercivity in Corollary \ref{C:spec-bd}.  We therefore keep Term I as is for now.  Next, we note that
$$\text{II} + \text{III} + \text{VII} 
= (-4c^2 + b(a)+\dot a) \la \partial_x v, (x-a)\eta \ra 
+ O(h) \|e^{-\epsilon |x-a|}v\|_{H_x^1} \,.$$
Next, $|\text{IV}| \lesssim \|e^{-\epsilon |x-a|} v\|_{H_x^1}^2$.  Also,
$$\text{V} =  \left( -\dot a + 4c^2 - \frac{1}{16c^2} \partial_c B \right) \la \partial_a\eta, (x-a) \eta \ra \,.$$
It happens that $\la \theta + 2y\theta', y\theta\ra=0$ and hence $\text{VI} = O(h^2)$.  Finally, $|\text{VIII}| \lesssim |\dot c| \|e^{-\epsilon |x-a|} v\|_{H_x^1} $, to which we can append the estimate \eqref{E:c}.  Collecting, we obtain \eqref{E:a}.  

\end{proof}

\section{Local virial estimate}
\label{S:locvir}

Next, we begin to implement the Martel-Merle \cite{MM} virial identity. Let $\Phi \in C(\mathbb{R})$, $\Phi(x)=\Phi(-x)$, $\Phi'(x) \leq 0$ on $(0,+\infty)$ such that $\Phi(x) = 1$ on $[0,1]$ and $\Phi(x) = e^{-x}$ on $[2,+\infty)$, and $e^{-x} \leq \Phi(x) \leq 3e^{-x}$ on $(0,+\infty)$.  Let $\tilde \Psi(x) = \int_0^x \Phi(y) \, dy$, and for $A\gg 1$ (to be chosen later) set $\psi(x) = A\tilde \Psi(x/A)$.

The following is the (scaled-out to unity version of) Martel-Merle's virial estimate.

\begin{lemma}[Martel-Merle {\cite[Lemma 1, Step 2 in Apx. B]{MM}} and {\cite[Prop. 6]{MM2}}]
\label{L:MM}
There exists $A$ sufficiently large and $\lambda_0>0$ sufficiently small such that if $w$ satisfies the orthogonality conditions
$$\la w, \theta \ra =0 \quad \text{and} \quad \la w, y\theta\ra =0 \,,$$
then we have the estimate
$$\lambda_0 \int (w_y^2 + w^2) e^{-|y|/A} \leq -\la  \psi w , \, \partial_y \mathcal{L}w \ra + \frac{\la \psi w, \theta'\ra \la \partial_y\mathcal{L}w,y\theta\ra}{\la\theta',y\theta\ra}\,.$$
\end{lemma}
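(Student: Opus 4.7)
The plan is to follow Martel--Merle's original argument \cite{MM, MM2}, adapted to the unit-scale operator $\mathcal{L}$. I would proceed in three steps.

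First, I would expand $-\la \psi w, \partial_y \mathcal{L} w \ra$ by moving $\partial_y$ onto $\psi w$ via integration by parts, then substituting $\mathcal{L} = 4 - \partial_y^2 - 6\theta$ and integrating by parts again on the $w w_{yy}$ and $w w_y$ terms. A direct computation should yield
\[ Q(w) \defeq -\la\psi w,\partial_y\mathcal{L}w\ra = \tfrac{3}{2}\int \psi'\, w_y^2 + \int \psi'(2 - 3\theta) w^2 - \tfrac{1}{2}\int \psi''' w^2 + 3\int \psi\theta'\, w^2. \]
Since $\psi'(y) = \Phi(y/A) \asymp e^{-|y|/A}$ is exactly the target weight and $\psi''' = A^{-2}\Phi''(y/A) = O(A^{-2})$, the kinetic piece already has the correct localization; the analytic difficulty is that the potential $\psi'(2-3\theta)$ has a negative well at the origin (since $\theta(0) = 2$), so coercivity must come from spectral arguments rather than pointwise positivity.

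Second, I would account for the zero mode of $\mathcal{L}$. From Section \ref{S:spectral}, $\ker\mathcal{L} = \spn\{\theta'\}$, a direction \emph{not} controlled by the orthogonalities $\la w,\theta\ra = \la w,y\theta\ra = 0$ alone. The correction $\la \psi w, \theta'\ra \la \partial_y\mathcal{L}w, y\theta\ra / \la \theta', y\theta\ra$ appearing on the right-hand side is precisely the device that reduces us to the triply-orthogonal case $\la \psi w,\theta'\ra = 0$: it is bilinear in $w$, it vanishes whenever $\la \psi w,\theta'\ra = 0$, and its denominator is nonzero (indeed $\la \theta', y\theta\ra = -\tfrac12 \|\theta\|_{L^2}^2 = -\tfrac{8}{3}$). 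The algebraic identity $\mathcal{L}(y\theta') = -2\theta''$, which follows from $\mathcal{L}\theta' = 0$ by a direct calculation, is what makes this correction compensate exactly the $\theta'$-component of $\psi w$ in $Q(w)$.

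Third, I would prove the coercivity $Q(w) \geq \lambda_0 \int(w_y^2 + w^2)e^{-|y|/A}$ on the three-orthogonality subspace by contradiction. Normalize the weighted $H^1$ norm to one and take a sequence $w_n$ with $Q(w_n)\to 0$; extract a weak limit $w_\infty$ in the exponentially weighted Sobolev space (whose embedding into the weighted $L^2$ is compact via a Rellich-type argument). The three orthogonalities pass to the limit while $Q(w_\infty)\le 0$; any such extremal $w_\infty$ is forced into a finite-dimensional subspace related to $\ker \mathcal{L}$, but the added orthogonality excludes $\theta'$, so $w_\infty = 0$, a contradiction. The main obstacle is making this quantitative and choosing $A$ uniformly: one must pick $A$ large enough that the finite-$A$ perturbations (the $\psi'''$ term and the truncation of $\psi\theta'$ against $y\theta'$) are dominated by the spectral gap of $\mathcal{L}$ on the constrained subspace produced by Section \ref{S:spectral}. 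This careful balance is the technical heart of \cite[Appendix~B]{MM} and is where the explicit numerology (the choice of $A$ and the size of $\lambda_0$) enters.
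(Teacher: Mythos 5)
The paper does not actually prove this lemma: it quotes it from Martel--Merle and only sketches the reduction (Step~2 of Apx.~B of \cite{MM} localizes to $A=\infty$, $\psi(y)=y$; integration by parts together with $\la w,\theta\ra=0$ then converts that case into the positivity estimate \eqref{E:MMspec} for the operator $L=-\partial_y^2+\tfrac43+2y\theta'-2\theta$, which is Prop.~3 of \cite{MM} / Prop.~6 of \cite{MM2} and is \emph{proved in \cite{MM2}}). Your Step~1 reproduces the virial computation correctly (the formula for $Q(w)$ checks out, as do $\la\theta',y\theta\ra=-\tfrac83$ and $\mathcal{L}(y\theta')=-2\theta''$), and your Step~2 correctly identifies the structural role of the correction term; up to reordering (you stay at finite $A$ rather than first localizing to $A=\infty$), this matches the cited reduction.

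The genuine gap is in Step~3. The coercivity to be proved is positivity (after adding the correction term) of the quadratic form of $L=-\partial_y^2+\tfrac43+2y\theta'-2\theta$ on the subspace $\la w,\theta\ra=\la w,y\theta\ra=0$, and $L$ is \emph{not} $\mathcal{L}$: the spectral facts from \S\ref{S:spectral} concern $\mathcal{L}$ and transfer no information about $L$. In particular the assertion that ``any such extremal $w_\infty$ is forced into a finite-dimensional subspace related to $\ker\mathcal{L}$, but the added orthogonality excludes $\theta'$, so $w_\infty=0$'' is unsupported --- one checks $L\theta'\neq 0$, and $L$ has its own negative direction (you note yourself that the potential has a well of depth $2-3\theta(0)=-4$ at the origin), whose interaction with the two constraints and the correction term is exactly the content of Prop.~6 in \cite{MM2}, a delicate multi-page spectral analysis rather than a soft argument. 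A compactness/contradiction argument can only upgrade \emph{strict} positivity on the constrained subspace to \emph{uniform} positivity; it cannot produce the strict positivity itself, so your Step~3 silently assumes the hard part. (Relatedly, the claim that the correction term ``reduces us to the triply-orthogonal case'' is not carried out: adding a bilinear term that vanishes when $\la\psi w,\theta'\ra=0$ does not by itself show the inequality on the complement.) To close the argument you must either reprove \eqref{E:MMspec} or cite it, as the paper does.
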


Step 2 in Apx. B of \cite{MM} is a localization argument that shows that it suffices to consider the case $A=\infty$ and $\psi(y)=y$.  Some integration by parts manipulations and the fact that $\la w, \theta\ra =0$ convert this case to the estimate
\begin{equation}
\label{E:MMspec}
\|w\|_{H^1}^2 \lesssim \frac32 \la Lw,w \ra + \frac{6}{\|\theta\|_{L^2}^2}\la w, y\theta'\ra \la w, \theta^2 \ra \,,
\end{equation}
where $L = (\frac43 + 2y\theta' -2 \theta) - \partial_y^2$.  The positivity estimate \eqref{E:MMspec} appears as Prop. 3 in \cite{MM} and as Prop. 6 in \cite{MM2}, and is proved in \cite{MM2}.  

By scaling Lemma \ref{L:MM}, we obtain the following version adapted to $\mathcal{K}$.
\begin{corollary}
\label{C:MM}
There exists $A$ sufficiently large and $\lambda_0>0$ sufficiently small such that if $v$ satisfies the orthogonality conditions \eqref{E:so}, then  (with $\psi=\psi(x-a)$)
$$\lambda_0 \int (v_x^2 + v^2) e^{-c|x-a|/A} \leq 
- \la \psi v , \partial_x \mathcal{K}v \ra 
+ \frac{ \la \psi v, \partial_x\eta\ra \la \partial_x \mathcal{K}v, (x-a)\eta\ra}{\la \partial_x \eta, (x-a)\eta\ra}
\,.$$ 
\end{corollary}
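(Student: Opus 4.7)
The plan is to deduce Corollary \ref{C:MM} from Lemma \ref{L:MM} by pulling back along the soliton change of variables $y = c(x-a)$. Using the introduction's convention $v(x) = 2c^2 w(c(x-a))$, I set $w(y) = (2c^2)^{-1}v(y/c+a)$. A direct substitution shows that the orthogonality conditions \eqref{E:so} translate (up to nonzero $c$-factors) into $\la w, \theta\ra = 0$ and $\la w, y\theta\ra = 0$, so Lemma \ref{L:MM} applies to $w$.

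Next I would record the transformation formulas. A short calculation gives $\mathcal{K}v(x) = 2c^4\,\mathcal{L}w(y)$, hence $\partial_x\mathcal{K}v(x) = 2c^5\,\partial_y\mathcal{L}w(y)$. Combined with $\theta(y) = c^{-2}\eta(x)$, $\theta'(y) = c^{-3}\partial_x\eta(x)$, $y\theta(y) = c^{-1}(x-a)\eta(x)$, and $dy = c\,dx$, each inner product on the right-hand side of Lemma \ref{L:MM} converts into its Corollary \ref{C:MM} counterpart times an explicit power of $c$. The exponential factor needs no adjustment: $e^{-|y|/A} = e^{-c|x-a|/A}$ automatically.

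The only mildly delicate point concerns the cutoff $\psi$. Lemma \ref{L:MM} produces $\psi(y) = \psi(c(x-a))$, whereas Corollary \ref{C:MM} wants $\psi$ evaluated at $x-a$. This can be handled either by the rescaling identity $\psi_A(cx) = c\,\psi_{A/c}(x)$ (which follows from $\psi_A(x) = A\tilde\Psi(x/A)$), or by simply invoking Lemma \ref{L:MM} with $cA$ in place of $A$. Since $c \in [\delta, \delta^{-1}]$, the admissible range of $A$ stays compactly bounded, so a single fixed $A$ (and a single $\lambda_0$) works uniformly in $c$.

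Collecting the $c$-factors, the left-hand side of Lemma \ref{L:MM} becomes, up to an overall multiple of $(4c^5)^{-1}$, the quantity $\lambda_0\int(v_x^2 + c^2 v^2)\,e^{-c|x-a|/A}\,dx$, while the right-hand side of Lemma \ref{L:MM} is $(4c^5)^{-1}$ times the desired right-hand side of Corollary \ref{C:MM}. Multiplying through by $4c^5$ and absorbing the factor $\min(1,c^2) \geq \min(1,\delta^2)$ into a smaller $\lambda_0$ finishes the proof. The only obstacle is therefore this $c$-bookkeeping, which is routine; no new analytic input beyond Lemma \ref{L:MM} is required.
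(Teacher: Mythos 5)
Your proposal is correct and is exactly the argument the paper intends: the paper's entire justification for Corollary \ref{C:MM} is the phrase ``by scaling Lemma \ref{L:MM},'' and your change of variables $y=c(x-a)$, $v=2c^2w$, with the identities $\partial_x\mathcal{K}v = 2c^5\,\partial_y\mathcal{L}w$ and the uniform $(4c^5)^{-1}$ factor on both sides, fills in that scaling correctly. Your handling of the cutoff (replacing $A$ by $A/c$ and using $c\in[\delta,\delta^{-1}]$ to keep $A$ and $\lambda_0$ uniform) and the absorption of the $c^2$ in front of $v^2$ into $\lambda_0$ are the right bookkeeping details.
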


\begin{lemma}[application of local virial identity]
\label{L:locvir}
Suppose that we are given $b_0\in C_c^\infty(\mathbb{R}^2)$ and $\delta>0$.  (Implicit constants below depend only on $b_0$ and $\delta$.)
Suppose that $|-\dot a + 4c^2 - b(a)| \ll 1$, $\|v\|_{H_x^1} \ll 1$, $v$ solves \eqref{E:v} and satisfies \eqref{E:so}, and $\delta\leq c \leq \delta^{-1}$.   Then with $\psi=\psi(x-a)$, we have 
\begin{equation}
\label{E:locvirest}
\| e^{-\epsilon |x-a|} v\|_{H_x^1}^2 \leq -\kappa_1 \partial_t \int \psi v^2 + \kappa_2 h^2 + \kappa_2 h\|v\|_{H_x^1}^2\,,
\end{equation}
where $\epsilon=\epsilon(\delta)>0$ and $\kappa_j=\kappa_j(\delta,b_0)>0$.
Integrating over $[0,T]$, we obtain with $T\lesssim h^{-1}$,
\begin{equation}
\label{E:new-11}
\|e^{-\epsilon |x-a|} v\|_{L_{[0,T]}^2H_x^1} \lesssim \|v\|_{L_{[0,T]}^\infty H_x^1} + T^{1/2}h\,.
\end{equation}
\end{lemma}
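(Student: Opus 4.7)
The plan is to differentiate the weighted quantity $I(t)\defeq\int\psi(x-a(t))\,v(x,t)^2\,dx$ in time so that the pairing $\la\psi v,\partial_x\mathcal{K}v\ra$, which Corollary~\ref{C:MM} converts into coercivity for $\|e^{-\epsilon|x-a|}v\|_{H^1}^2$, emerges as the leading contribution; all other terms generated by \eqref{E:v} and by $\partial_t\psi=-\dot a\psi'$ must then be shown to be lower-order perturbations that either have a $\partial_t(\cdot)$ structure, are absorbable into the coercivity, or are bounded by $h^2+h\|v\|_{H^1}^2$.

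Substituting \eqref{E:v} into $\partial_t I$, integrating by parts in $x$ on each term carrying a bare $\partial_x$, and using $2\la\psi v,\partial_x(bv)\ra=\int\psi(\partial_x b)v^2-\int\psi'bv^2$, I arrive at
\[
\partial_t I = 2\la\psi v,\partial_x\mathcal{K}v\ra + (4c^2-\dot a)\!\int\!\psi'v^2 + \int\!\psi(\partial_x b)v^2 - \int\!\psi'bv^2 + 4\!\int\!\psi'v^3 + 2\la\psi v,F_0\ra.
\]
Corollary~\ref{C:MM} gives $-\la\psi v,\partial_x\mathcal{K}v\ra\geq\lambda_0\|v_e\|_{H^1}^2 - \mathcal{E}$, where $v_e\defeq e^{-\epsilon|x-a|}v$ with $\epsilon=c/(2A)$ and $\mathcal{E}$ is the fractional correction term. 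By Lemma~\ref{L:param}, the ratio $\la\partial_x\mathcal{K}v,(x-a)\eta\ra/\la\partial_x\eta,(x-a)\eta\ra$ appearing in $\mathcal{E}$ equals $-(\dot a-4c^2+b(a))$ plus $O(h\|v_e\|_{H^1}+\|v_e\|_{H^1}^2+h^2)$. The leading part of $2\mathcal{E}$ then combines with $(4c^2-\dot a)\int\psi'v^2-\int\psi'bv^2$, using $|b(x)-b(a)|\lesssim h$ on the essentially $O(1)$-sized support of $\psi'$, to produce a net contribution of the form
\[
-[\dot a-4c^2+b(a)]\bigl(2\la\psi v,\partial_x\eta\ra + \int\psi'v^2\bigr) + O(h)\|v_e\|_{L^2}^2,
\]
whose leading coefficient is $\ll 1$ by hypothesis; Young's inequality together with the sub-bound $|\dot a-4c^2+b(a)|\lesssim\|v_e\|_{H^1}+h^2$ from Lemma~\ref{L:param} then renders this piece bounded by $\tfrac{\lambda_0}{2}\|v_e\|_{H^1}^2 + O(h^2+h\|v\|_{H^1}^2)$, provided the smallness thresholds for $\|v\|_{H^1}$ and $|\dot a-4c^2+b(a)|$ are chosen small enough relative to $\lambda_0$ and the other $\delta,b_0$-dependent constants.

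For the remaining terms in $\partial_t I$: $\int\psi(\partial_x b)v^2=O(h)\|v\|_{L^2}^2$ by slow variation of $b$; $|4\int\psi'v^3|\lesssim\|v\|_{H^1}\|v_e\|_{H^1}^2$ is cubic and absorbable since $\|v\|_{H^1}\ll 1$; and combining \eqref{E:F0-est} with the Lemma~\ref{L:param} estimate on $\dot c$ and the smallness hypothesis on $\dot a-4c^2+b(a)$ yields $\|e^{\epsilon|x-a|}F_0\|_{H^1}\lesssim h+\|v_e\|_{H^1}$, hence $|2\la\psi v,F_0\ra|\leq\tfrac{\lambda_0}{4}\|v_e\|_{L^2}^2+O(h^2)$. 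Moving the absorbable $\|v_e\|_{H^1}^2$ pieces to the left-hand side of $2\lambda_0\|v_e\|_{H^1}^2\leq-\partial_t I+\cdots$ produces \eqref{E:locvirest}. For \eqref{E:new-11}, I integrate \eqref{E:locvirest} over $[0,T]$, bound the telescoping $\kappa_1[I(T)-I(0)]$ by $\|v\|_{L^\infty_T L^2_x}^2$ using $\psi\in L^\infty$, and use $T\lesssim h^{-1}$ to handle $\kappa_2 h\int_0^T\|v\|_{H^1}^2\,dt\leq\kappa_2 hT\|v\|_{L^\infty_T H^1_x}^2\lesssim\|v\|_{L^\infty_T H^1_x}^2$; taking a square root yields the claim.

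The main obstacle I anticipate is the cancellation described in the second paragraph: the transport commutator $(4c^2-\dot a)\int\psi'v^2$ and the virial correction $\mathcal{E}$ are each \emph{a priori} of size $O(1)\cdot\|v_e\|_{H^1}^2$ and would individually overwhelm the $\lambda_0$-coercivity supplied by Corollary~\ref{C:MM}; only their combination (engineered so that the common coefficient becomes $\dot a-4c^2+b(a)$, small by hypothesis) is absorbable, and this cancellation requires simultaneous use of slow variation of $b$ and the approximate soliton equation $\dot a\approx 4c^2-b(a)$ from Lemma~\ref{L:param}.
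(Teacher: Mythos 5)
Your overall scheme is the one the paper uses: differentiate $\int\psi(x-a)v^2$, invoke Corollary \ref{C:MM} for coercivity, combine the transport term with $-\int\psi' b v^2$ so that the coefficient becomes $\dot a-4c^2+b(a)$, and bring in \eqref{E:a} to relate that quantity to the ratio $R\defeq\langle\partial_x\mathcal{K}v,(x-a)\eta\rangle/\langle\partial_x\eta,(x-a)\eta\rangle$. However, you have misassigned the one cancellation the argument cannot do without, and the two separate bounds you substitute for it do not close.

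Writing $v_e=e^{-\epsilon|x-a|}v$, the dangerous term is the forcing contribution $2\langle\psi v,F_0\rangle$, whose leading part is $2(\dot a-4c^2+b(a))\langle\psi v,\partial_x\eta\rangle$. By \eqref{E:a} this equals $-2R\,\langle\psi v,\partial_x\eta\rangle$ up to acceptable errors, and since $|R|\sim\|v_e\|_{L^2}$ with an $O(1)$ constant, it is genuinely of size $O(1)\cdot\|v_e\|_{H^1}^2$ with a constant that has no reason to be smaller than $\lambda_0$. It therefore cannot be bounded by $\tfrac{\lambda_0}{4}\|v_e\|_{L^2}^2+O(h^2)$ as you claim: your estimate $\|e^{\epsilon|x-a|}F_0\|_{H^1}\lesssim h+\|v_e\|_{H^1}$ leads to $C\|v_e\|_{L^2}\|v_e\|_{H^1}$ with an uncontrolled $C$. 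The same defect appears in your treatment of the piece $-2(\dot a-4c^2+b(a))\langle\psi v,\partial_x\eta\rangle$ coming from $2\mathcal{E}$: Young plus the sub-bound $|\dot a-4c^2+b(a)|\lesssim\|v_e\|_{H^1}+h^2$ again yields $C\|v_e\|_{H^1}^2$ with a constant $C$ determined by $b_0$, $\delta$, and the soliton profile; shrinking the smallness thresholds on $\|v\|_{H^1}$ and on $|\dot a-4c^2+b(a)|$ does not shrink $C$, so this cannot be absorbed into the coercive term $2\lambda_0\|v_e\|_{H^1}^2$. (Using only $|\dot a-4c^2+b(a)|=\varepsilon\ll1$ is also useless: Young then leaves $\varepsilon^2/\lambda_0$, which is $o(1)$ but not $O(h^2)$.) The fix is precisely the pairing you did not make: the two occurrences of $R\,\langle\psi v,\partial_x\eta\rangle$ --- one in $2\mathcal{E}$ and one, with opposite sign via \eqref{E:a}, inside $2\langle\psi v,F_0\rangle$ --- must be cancelled against each other, leaving only errors of size $O(h\|v_e\|_{H^1}+\|v_e\|_{H^1}^2+h^2)\|v_e\|_{L^2}+O(h+|\dot c|)\|v_e\|_{L^2}$, all absorbable. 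This is what the paper does by substituting \eqref{E:a} into the $F_0$ term so that it reproduces the virial correction term of Corollary \ref{C:MM} exactly. By contrast, the transport piece $-(\dot a-4c^2+b(a))\int\psi'v^2$, which you flag as the one needing cancellation with $\mathcal{E}$, is harmless on its own: it is a square of $\|v_e\|_{L^2}$ multiplied by a coefficient that is $\ll1$ by hypothesis. The rest of your argument (the formula for $\partial_t\int\psi v^2$, the treatment of the cubic and $b_x$ terms, and the integration step giving \eqref{E:new-11}) is correct.
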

\begin{proof}
Recalling that $\psi = \psi(x-a)$,
$$
\partial_t \int \psi v^2 = 
\begin{aligned}[t]
&- \dot a \int \psi' v^2 + 2\int \psi \, v \, \partial_x \mathcal{K}v - 8c^2 \int \psi v \partial_x v + 2\int \psi v \partial_x(bv) \\
&- 6 \int \psi v \partial_x (v^2) + 2\int \psi v F_0 
\end{aligned}
$$
We reorganize the terms in the equation to
$$
\begin{aligned}[t]
\indentalign 
\underbrace{-2\int \psi v \, \partial_x \mathcal{K} v}_{\text{A}} 
\underbrace{-2\int \psi v F_0}_{\text{B}} \\
&= 
\underbrace{-\partial_t \int \psi v^2}_{\text{I}}
\underbrace{- \dot a \int \psi' v^2}_{\text{II}} 
\underbrace{- 8c^2 \int \psi v \partial_x v}_{\text{III}}
\underbrace{+ 2\int \psi v \partial_x(bv)}_{\text{IV}}
\underbrace{- 6 \int \psi v \partial_x(v^2)}_{\text{V}} \,.
\end{aligned}
$$
Note that we have written this equation symbolically in the form
\begin{equation}
\label{E:new-0}
\text{A}+\text{B} = \text{I}+\text{II}+\text{III}+\text{IV}+\text{V} \,,
\end{equation}
and we now consider these terms separately.  Integration by parts yields
\begin{align*}
\text{III} &= 4c^2 \int \psi' v^2\\
\text{IV} &= -\int \psi' b v^2 + \int \psi b_x v^2\\
& = -\int \psi' b(a)v^2 - \int \psi' (b(x)-b(a)) v^2 + \int \psi b_x v^2
\end{align*}
Hence
$$\text{II}+\text{III}+\text{IV} = (-\dot a + 4c^2 -b(a)) \int \psi' v^2 + O(h) \|v\|_{L^2}^2 \,,$$
from which it follows that
\begin{equation}
\label{E:new-1}
| \text{II}+\text{III}+\text{IV}| \lesssim |\dot a - 4c^2 + b(a)| \|e^{-\epsilon |x-a|} v\|_{L_x^2}^2 + h \|v\|_{L_x^2}^2 \,.
\end{equation}
Integration by parts also yields
$$\text{V} = 4 \int \psi' v^3 \,,$$
from which it follows that
\begin{equation}
\label{E:new-2}
|\text{V}| \lesssim  \|e^{-\epsilon|x-a|} v\|_{L_x^2}^2 \|v\|_{H_x^1} \,.
\end{equation}
Using that
$$F_0 = (\dot a- 4c^2 + b(a)) \partial_x \eta + O(h+|\dot c|) e^{-2\epsilon |x-a|} \,,$$
we obtain
$$\text{B} = -2 (\dot a -4c^2 + b(a))\la \psi v,  \partial_x \eta \ra + O(h+|\dot c|) \|e^{-\epsilon |x-a|} v \|_{L_x^2} \,.$$
By \eqref{E:a},
\begin{equation}
\label{E:new-3}
\text{B} = 2 \frac{\la \partial_x \mathcal{K}v, (x-a) \eta \ra}{\la \partial_x\eta, (x-a)\eta\ra} \la \psi v, \partial_x \eta \ra + O(h+|\dot c|) \|e^{-\epsilon |x-a|} v \|_{L_x^2} \,.
\end{equation}
Placing estimates \eqref{E:new-1}, \eqref{E:new-2}, and \eqref{E:new-3} into \eqref{E:new-0}, we obtain, for some constant $\kappa>0$, the bound 
\begin{align*}
\indentalign - 2\la \psi v, \partial_x \mathcal{K}v \ra + 2 \frac{ \la \psi v, \partial_x \eta\ra \la \partial_x \mathcal{K} v, (x-a) \eta \ra}{\la \partial_x \eta, (x-a) \eta \ra} \\
&\leq
\begin{aligned}[t]
&- \partial_t \int \psi v^2 + \kappa (|\dot a - 4c^2 + b(a)|+ \|v\|_{H_x^1}) \| e^{-\epsilon |x-a|} v\|_{L_x^2}^2   \\
&+ \kappa (h+|\dot c|) \|e^{-\epsilon |x-a|} v \|_{L_x^2} + \kappa h \|v\|_{L_x^2}^2
\end{aligned}
\end{align*}
Using  Corollary \ref{C:MM} and the assumptions $|\dot a - 4c^2 + b(a)| \ll 1$, $\|v\|_{H_x^1} \ll 1$, we obtain, for some constants $\kappa_1, \kappa_2>0$, the bound
\begin{equation}
\label{E:new-12}
\| e^{-\epsilon |x-a|}v\|_{H_x^1}^2 \leq - \kappa_1 \partial_t \int \psi v^2 + \kappa_2 (h+|\dot c|)^2 + \kappa_2 h \|v\|_{H_x^1}^2 \,.
\end{equation}
Note that \eqref{E:c} implies $|\dot c| \lesssim h + \|e^{-\epsilon|x-a|}v\|_{H_x^1}^2$.  Substituting this into \eqref{E:new-12} yields \eqref{E:locvirest}.
\end{proof}

\section{Energy estimate}
\label{S:energy}

Recall that $\mathcal{K} = 4c^2 - \partial_x^2 - 6\eta(\cdot,a,c)$.  Let 
$$\mathcal{E}(v) = \frac12 \la \mathcal{K}v,v\ra - \int v^3$$

\begin{lemma}[energy estimate]
\label{L:energy}
Suppose that we are given $b_0\in C_c^\infty(\mathbb{R}^2)$ and $\delta>0$.  (Implicit constants below depend only on $b_0$ and $\delta$.)
Suppose $v$ solves \eqref{E:v} and satisfies \eqref{E:so}, and $\delta \leq c\leq \delta^{-1}$.  Then
\begin{equation}
\label{E:energy-bd}
|\partial_t \mathcal{E}| \lesssim  
\begin{aligned}[t]
&|-\dot a + 4c^2 -b(a)| \|e^{-\epsilon |x-a|}v\|_{H_x^1}^2 + h\|v\|_{H_x^1}^2  \\
&+ h\|e^{-\epsilon |x-a|}v\|_{H^1} + \|e^{-\epsilon|x-a|}v\|_{H_x^1}^2 \|v\|_{H_x^1}^2
\end{aligned}
\end{equation}
\end{lemma}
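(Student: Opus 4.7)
The plan is to differentiate $\mathcal{E}(v)$ in time, substitute the equation \eqref{E:v}, and orchestrate several cancellations. Writing $\mathcal{E}'(v) \defeq \mathcal{K}v - 3v^2$, direct differentiation gives
\[
\partial_t \mathcal{E}(v) = \tfrac{1}{2}\langle (\partial_t \mathcal{K})v, v\rangle + \langle \mathcal{E}'(v), \partial_t v\rangle,
\]
and \eqref{E:v} rewrites as $\partial_t v = \partial_x \mathcal{E}'(v) - 4c^2 \partial_x v + \partial_x(bv) + F_0$. The integrable contribution $\langle \mathcal{E}'(v), \partial_x \mathcal{E}'(v)\rangle = \tfrac12 \int \partial_x(\mathcal{E}'(v))^2 = 0$ vanishes. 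Recording $\partial_t \mathcal{K} = 8c\dot c + 6\dot a \eta_x - 6\dot c \partial_c \eta$ (using $\partial_a \eta = -\eta_x$), four groupings remain to control.

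The main cancellation is among the $\int \eta_x v^2$ terms. The piece $\tfrac12 \langle (\partial_t \mathcal{K})v, v\rangle$ contributes $3\dot a \int \eta_x v^2$; the transport $-4c^2\langle \mathcal{E}'(v), \partial_x v\rangle$ yields $-12c^2 \int \eta_x v^2$ (after $\int v^2 v_x = 0$ and integration by parts on $\langle \mathcal{K}v, \partial_x v\rangle$); and the $bv_x$ portion of $\langle \mathcal{E}'(v), \partial_x(bv)\rangle$ generates $3\int b\, \eta_x v^2$ alongside $b_x$-small tails. Summing and splitting $b(x) = b(a) + (b(x)-b(a))$, these three pieces collapse to $3(\dot a - 4c^2 + b(a))\int \eta_x v^2 + O(h)\|e^{-\epsilon|x-a|}v\|_{L^2}^2$, since $(b(x)-b(a))\eta_x$ is $O(h)$ and exponentially localized. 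The first bracket is controlled by $|\dot a - 4c^2 + b(a)|\|e^{-\epsilon|x-a|}v\|_{H^1}^2$ and produces the first term of \eqref{E:energy-bd}.

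For the forcing $\langle \mathcal{E}'(v), F_0\rangle$ I decompose $F_0 = F_\| + F_\perp$. The algebraic core consists of the identities $\mathcal{K}\partial_a \eta = 0$ (from $\mathcal{L}\theta' = 0$, i.e.\ the zero eigenfunction of \S\ref{S:spectral}) and $\mathcal{K}\partial_c \eta = -8c\eta$ (obtained by $c$-differentiating the soliton relation $\mathcal{K}\eta = -3\eta^2$); combined with $\langle v,\eta\rangle = 0$ from \eqref{E:so}, they yield the crucial vanishing $\langle \mathcal{K}v, F_\|\rangle = 0$. The residual $-3\langle v^2, F_\|\rangle$ is bounded by $(|\alpha_1|+|\alpha_2|)\|e^{-\epsilon|x-a|}v\|_{L^2}^2$ where, by Taylor expansion of $\partial_{a,c}B$ and by \eqref{E:c}, $|\alpha_1| \lesssim |\dot a - 4c^2 + b(a)| + h$ and $|\alpha_2| \lesssim h + \|e^{-\epsilon|x-a|}v\|_{H^1}^2$. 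For $F_\perp$, of size $O(h)$ and exponentially localized, self-adjointness of $\mathcal{K}$ allows moving $\partial_x^2$ onto $F_\perp$, giving $|\langle \mathcal{K}v, F_\perp\rangle| \lesssim h\|e^{-\epsilon|x-a|}v\|_{H^1}$ --- precisely the third (unsquared) term of \eqref{E:energy-bd}.

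The surviving pieces are routine: the $b_x$-terms from integration by parts in the $bv$-contribution (such as $\int b_x v\,\mathcal{K}v$ and $\tfrac12 \int b_x v_x^2$) are $O(h)\|v\|_{H^1}^2$; the cubic $\int b_x v^3$ is $O(h)\|v\|_{H^1}^3 = O(h\|v\|_{H^1}^2)$ since $\|v\|_{H^1} \ll 1$; and the $\dot c$-terms $4c\dot c \int v^2$ and $-3\dot c \int \partial_c \eta \, v^2$ yield $O(h\|v\|_{H^1}^2 + \|e^{-\epsilon|x-a|}v\|_{H^1}^2\|v\|_{H^1}^2)$ after invoking $|\dot c| \lesssim h + \|e^{-\epsilon|x-a|}v\|_{H^1}^2$ from \eqref{E:c}, producing the second and fourth terms of \eqref{E:energy-bd}. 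The principal obstacle is bookkeeping: several $O(1)$-sized pieces must combine precisely to form the $|\dot a - 4c^2 + b(a)|$ coefficient, so the integration by parts must be organized to expose this cancellation rather than letting large terms leak into the smaller-weight error terms.
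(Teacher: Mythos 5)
Your argument is correct and follows essentially the same route as the paper: expand $\partial_t\mathcal{E}$, substitute \eqref{E:v}, exploit the antisymmetry cancellation (your $\langle \mathcal{E}'(v),\partial_x\mathcal{E}'(v)\rangle=0$ packages the paper's $\text{IA}=0$, $\text{ID}$ vs.\ $-3\langle v^2,\partial_x\mathcal{K}v\rangle$, and $\int v^2v_x=0$ in one stroke), combine the three $O(1)$ coefficients into $3(\dot a-4c^2+b(a))\int\eta_x v^2$, kill $\langle\mathcal{K}v,F_\|\rangle$ via $\mathcal{K}\partial_a\eta=0$, $\mathcal{K}\partial_c\eta\propto\eta$, $\langle v,\eta\rangle=0$, and absorb $|\dot c|$ using \eqref{E:c}. (Minor note: your identity $\mathcal{K}\partial_c\eta=-8c\eta$ is the correct constant; the paper writes $\mathcal{K}\partial_c\eta=\eta$, but only the proportionality to $\eta$ matters.)
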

We remark that by integrating \eqref{E:energy-bd} over $[0,T]$, $1\lesssim T \ll h^{-1}$, and applying Corollary \ref{C:spec-bd}, we obtain
\begin{equation}
\label{E:energy-bd2}
\|v\|_{L_{[0,T]}^\infty H_x^1} \lesssim 
\begin{aligned}[t]
&\|v_0\|_{H_x^1} + \| \dot a -4c^2 + b(a)\|_{L_{[0,T]}^\infty}^{1/2} \| e^{-\epsilon |x-a|}v\|_{L_{[0,T]}^2 H_x^1} \\
&  +  T^{1/4}h^{1/2} \|e^{-\epsilon |x-a|} v\|_{L_{[0,T]}^2H_x^1}^{1/2} + \|e^{-\epsilon|x-a|}v\|_{L_{[0,T]}^2H_x^1} \|v\|_{L_{[0,T]}^\infty H_x^1}\,.
\end{aligned}
\end{equation}

\begin{proof}
We compute
\begin{align*}
\partial_t \mathcal{E}(v) &=  \la \mathcal{K}v, \partial_t v\ra - 3\la v^2, \partial_t v\ra + 4c\dot c \|v\|_{L_x^2}^2 - 3 \la (\dot a \partial_a \eta  + \dot c\partial_c \eta) v,v\ra \\
&= \text{I}+\text{II}+\text{III}+\text{IV}
\end{align*}
Into I, we substitute \eqref{E:v}.
This gives
\begin{align*}
\text{I} &= \la \mathcal{K}v,\partial_x \mathcal{K}v \ra - 4c^2 \la \mathcal{K}v, \partial_x v\ra + \la \mathcal{K}v, \partial_x(bv) \ra - 3\la \mathcal{K}v, \partial_x v^2\ra + \la \mathcal{K}v, F_0 \ra \\
&= \text{IA}+\text{IB}+\text{IC}+\text{ID}+\text{IE}
\end{align*}
We have $\text{IA}=0$, while $\text{IB}= -12c^2 \la \eta_x, v^2\ra$.  For $\text{IC}$, numerous applications of integration by parts gives
$$\text{IC}= 2c^2 \la b_x, v^2 \ra +\frac32\la b_x, v_x^2\ra - \frac12 \la b_{xxx},v^2 \ra-3\la \eta b_x, v^2 \ra + 3\la \eta_x b, v^2 \ra \,,$$
and hence
$$\text{IC}= 3b(a)\la \eta_x, v^2 \ra +O( h \|v\|_{H^1}^2) \,.$$
Note
$$\text{IE} = \la v, \mathcal{K} F_\| \ra + \la v, \mathcal{K} F_\perp \ra \,.$$
But since $\mathcal{K}\partial_a \eta =0$, $\mathcal{K} \partial_c\eta = \eta$, and $\la v, \eta \ra=0$, we have $\la v, \mathcal{K} F_\| \ra =0$.  We estimate the second term to obtain
$$|\text{IE}| \lesssim h \|e^{-\epsilon |x-a|} v\|_{H_x^1} \,.$$ 
Combining, we obtain
$$\text{I} = 
\begin{aligned}[t]
&(12c^2 -3b(a))\la \partial_a\eta, v^2\ra -3\la \mathcal{K}v, \partial_x v^2 \ra \\
&+ O( h\|v\|_{H^1}^2 + h\|e^{-\epsilon|x-a|}v\|_{H^1}) \,.
\end{aligned}
$$ 
Substituting \eqref{E:v} into II, we obtain:
\begin{align*}
\text{II} &= -3\la v^2, \partial_x \mathcal{K}v\ra + 12 c^2 \la v^2, \partial_x v\ra - 3\la v^2, \partial_x(bv)\ra + 9 \la v^2, \partial_x v^2\ra - 3\la v^2, F_0\ra
\end{align*}
In II, we keep only the first term and estimate the rest to obtain
$$\text{II} = -3\la v^2, \partial_x \mathcal{K}v\ra + O ( h \|v\|_{H^1}^3 + \|e^{-\epsilon |x-a|}v\|_{H^1}^2 \|F_0 e^{2\epsilon |x-a|} \|_{H_x^1}) \,.$$
Note
$$\|e^{2\epsilon |x-a|} F_0\|_{H_x^1} \lesssim |\dot a- 4c^2-b(a)| + |\dot c| + h \,.$$
Collecting, we obtain 
\begin{equation}
\label{E:new-15}
|\partial_t \mathcal{E}| \lesssim  
\begin{aligned}[t]
&|-\dot a + 4c^2 -b(a)| \|e^{-\epsilon |x-a|}v\|_{H_x^1}^2 + (h+|\dot c|)\|v\|_{H_x^1}^2  \\
&+ h\|e^{-\epsilon |x-a|}v\|_{H^1} 
\end{aligned}
\end{equation}
Note that in the addition of terms I and II, the terms $\pm \la v^2, \partial_x \mathcal{K}v\ra$ canceled, and in the addition of I and IV, the two $O(1)$ coefficients $-3\dot a$ and $12c^2 -3b(a)$ were combined to give the smaller coefficient $-3\dot a + 12c^2 -3b(a)$.

Finally, we note that \eqref{E:c} implies $|\dot c|\lesssim h + \|e^{-\epsilon|x-a}v\|_{H_x^1}^2$.  Substituting this into \eqref{E:new-15} yileds \eqref{E:energy-bd}. 
\end{proof}

\section{Proof of Theorem \ref{T:main}}
\label{S:proof}

It will be shown later that Theorem \ref{T:main} follows from the following proposition.

\begin{proposition}
\label{P:main}
Suppose we are given $b_0\in C_c^\infty(\mathbb{R}^2)$ and $\delta>0$.  (Implicit constants below depend only on $b_0$ and $\delta$).   Suppose that we are further given $a_0\in \mathbb{R}$, $c_0>0$, $\kappa \geq 1$, $h>0$, and $v_0$ satisfying \eqref{E:so}, such that
$$0< h \lesssim \kappa^{-4} \,, \qquad \|v_0\|_{H_x^1} \leq \kappa h^{1/2} \,.$$  Let $u(t)$ be the solution to \eqref{E:pKdV} with $b(x,t)=b_0(hx,ht)$ and initial data $\eta(\cdot, a_0,c_0)+v_0$.   Then there exist a time $T'>0$ and trajectories $a(t)$ and $c(t)$ defined on $[0,T']$ such that $a(0)=a_0$, $c(0)=c_0$ and the following holds, with $v\defeq u-\eta(\cdot, a,c)$:
\begin{enumerate}
\item \label{I:orth} 
On $[0,T']$, the orthogonality conditions \eqref{E:so} hold.
\item \label{I:time-scale}
Either $c(T')=\delta$, $c(T')=\delta^{-1}$, or $T' \sim  h^{-1}$.
\item \label{I:en-bd}
$\|v\|_{L_{[0,T']}^\infty H_x^1} \lesssim \kappa h^{1/2} \,,$
\item \label{I:local-est}
$\|e^{-\epsilon |x-a|} v\|_{L_{[0,T']}^2 H_x^1}  \lesssim \kappa h^{1/2}$.
\item \label{I:dota}
$\int_0^{T'}|\dot a - 4c^2 +b(a)|\, dt \lesssim \kappa$.
\item \label{I:dotc}
$\int_0^{T'} |\dot c - \frac13 cb'(a) | \, dt \lesssim \kappa^2h$.
\end{enumerate}
\end{proposition}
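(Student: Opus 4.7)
Plan: I would run a continuity (bootstrap) argument that couples the three estimates of Lemmas \ref{L:param}, \ref{L:locvir}, and \ref{L:energy}. The hypothesis $h \lesssim \kappa^{-4}$ together with $\kappa \geq 1$ guarantees $\kappa h^{1/2} \lesssim \kappa^{-1} \leq 1$, so $\|v_0\|_{H^1} \leq \kappa h^{1/2}$ is sufficiently small for Lemma \ref{L:ift} to apply; since $v_0$ already satisfies \eqref{E:so}, we set $a(0)=a_0$, $c(0)=c_0$, and extend the orthogonality-preserving trajectories forward in time as long as $\|v\|_{H^1}$ remains small. Fix a large absolute constant $M$ and an absolute constant $C_*\geq 1$, and define $T'$ as the supremum of times $T\leq C_*h^{-1}$ on which the orthogonality conditions hold, $\delta\leq c\leq \delta^{-1}$, and the bootstrap bounds $N_\infty := \|v\|_{L_{[0,T]}^\infty H^1}\leq M\kappa h^{1/2}$ and $N_{loc} := \|e^{-\epsilon|x-a|}v\|_{L_{[0,T]}^2 H^1}\leq M\kappa h^{1/2}$ are satisfied. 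The task then reduces to strictly improving the last two inequalities on $[0,T']$.

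On $[0,T']$, after estimating the quotient term in \eqref{E:a} by integration by parts and the exponential decay of $(x-a)\eta$, Lemma \ref{L:param} yields $|\dot a - 4c^2 + b(a)|\lesssim \|e^{-\epsilon|x-a|}v\|_{H^1} + h^2$ pointwise in time, hence $\|\dot a -4c^2+b(a)\|_{L^\infty} \lesssim N_\infty + h^2 \ll 1$. This verifies the hypothesis of Lemma \ref{L:locvir}, which (using $T'\lesssim h^{-1}$, so $T'^{1/2}h\lesssim h^{1/2}$) gives $N_{loc}\lesssim N_\infty + h^{1/2}$. Feeding these into \eqref{E:energy-bd2} yields
\begin{equation*}
N_\infty \;\lesssim\; \kappa h^{1/2} + (N_\infty^{1/2}+h)\,N_{loc} + h^{1/4}N_{loc}^{1/2} + N_{loc}\,N_\infty.
\end{equation*}
Under the bootstrap, each nonlinear term on the right carries, beyond the size $\kappa h^{1/2}$, a small factor involving some positive power of $M\kappa h^{1/2}$ or of $\kappa^{1/2}h^{1/4}$; because $h\lesssim\kappa^{-4}$ forces $\kappa h^{1/2}\lesssim \kappa^{-1}$ and $\kappa^{1/2}h^{1/4}\lesssim \kappa^{-1/2}$, all these factors are $\ll 1$ provided the implicit constant in $h\lesssim \kappa^{-4}$ is chosen small enough relative to $M$. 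The nonlinear terms therefore absorb, leaving $N_\infty\leq C\kappa h^{1/2}$ with $C$ absolute; substituting back gives $N_{loc}\leq C'\kappa h^{1/2}$. Choosing $M\geq 2\max(C,C')$ strictly improves the bootstrap on $[0,T']$, so by continuity $T'$ extends until $c$ exits $[\delta,\delta^{-1}]$ or $T'=C_*h^{-1}\sim h^{-1}$, yielding claims (1)--(4).

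For (5) and (6), integrate Lemma \ref{L:param} over $[0,T']$ and apply Cauchy--Schwarz in time together with $T'\lesssim h^{-1}$ and the just-proved bound $N_{loc}\lesssim \kappa h^{1/2}$:
\begin{equation*}
\int_0^{T'} |\dot a - 4c^2 + b(a)|\,dt \;\lesssim\; T'^{1/2} N_{loc} + T' h^2 \;\lesssim\; \kappa + h \;\lesssim\; \kappa,
\end{equation*}
\begin{equation*}
\int_0^{T'} \left|\dot c - \tfrac13 c b'(a)\right| dt \;\lesssim\; h T'^{1/2} N_{loc} + N_{loc}^2 + T' h^2 \;\lesssim\; \kappa h + \kappa^2 h + h \;\lesssim\; \kappa^2 h.
\end{equation*}
The main obstacle is the self-referential, coupled nature of the estimates: the energy bound on $N_\infty$ feeds through $N_{loc}$ and the parameter equations back into $N_\infty$. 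The scheme closes only because each round of the coupling contributes an extra small factor, and the precise smallness assumption $h\lesssim \kappa^{-4}$ is tuned to the worst such cross term (here the $(N_\infty^{1/2}+h)N_{loc}$ term in the energy estimate). Carefully tracking the interplay of powers of $\kappa$, $h$, and the bootstrap constant $M$ is the essential technical content of the proof.
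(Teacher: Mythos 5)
Your proposal follows essentially the same route as the paper: a bootstrap on $[0,T']$ with $T'\lesssim h^{-1}$ that feeds Lemma \ref{L:param} (to verify the hypothesis of Lemma \ref{L:locvir}) into the local virial bound $N_{loc}\lesssim N_\infty+h^{1/2}$ and then into \eqref{E:energy-bd2}, closing via $h\lesssim\kappa^{-4}$, and then integrates \eqref{E:a}, \eqref{E:c} with Cauchy--Schwarz in time to get items (5)--(6). The only substantive deviation is a cosmetic one (you bootstrap both $N_\infty$ and $N_{loc}$, the paper only $N_\infty$), plus one point where your stated justification does not actually apply: the forcing term $T^{1/4}h^{1/2}N_{loc}^{1/2}$ in \eqref{E:energy-bd2}. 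Under your bootstrap it is of size $M^{1/2}\kappa^{1/2}h^{1/2}=\bigl(M^{1/2}\kappa^{-1/2}\bigr)\cdot\kappa h^{1/2}$, and the prefactor $M^{1/2}\kappa^{-1/2}$ contains no positive power of $h$ or of $\kappa h^{1/2}$, so it cannot be made small by tightening the constant in $h\lesssim\kappa^{-4}$ (for $\kappa=1$ it equals $M^{1/2}$). The bootstrap still closes, but for a different reason: this term is \emph{sublinear} in the bootstrap constant $M$, so it is absorbed by $\tfrac{M}{2}\kappa h^{1/2}$ once $M$ is chosen large relative to the absolute constants. The paper instead handles it by working on an interval of length $\omega h^{-1}$ with $\omega\ll 1$ chosen last, which turns $T^{1/4}h^{1/2}N_{loc}^{1/2}$ into $\omega^{1/4}\cdot O(\kappa h^{1/2})$; this is also what makes the iteration in Corollary \ref{C:main} over consecutive intervals $I_j$ clean. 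Either repair works, but you should state one of them explicitly rather than lumping this term with the genuinely superlinear ones.
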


\begin{proof}
Recall our convention that implicit constants depend only on $b_0$ and $\delta$.
By Lemma \ref{L:ift} and the continuity of the flow $u(t)$ in $H^1$, there exists some $T''>0$ on which $a(t)$, $c(t)$ can be defined so that \eqref{E:so} holds.  Now take $T''$ to be the maximal time on which $a(t)$, $c(t)$ can be defined so that \eqref{E:so} holds.  Let $T'$ be first time $0\leq T' \leq T''$ such that $c(T')=\delta$,  $c(T')=\delta^{-1}$, $T'=T''$, or $\omega h^{-1}$ (whichever comes first).  Here, $0 < \omega \ll 1$ is a constant that will be chosen suitably small at the end of the proof (depending only upon implicit constants in the estimates, and hence only on $b_0$ and $\delta$).

\begin{remark}
\label{R:bs1}
We will show that on $[0,T']$, we have $\|v(t)\|_{H_x^1}\lesssim \kappa h^{1/2}$, and hence by Lemma \ref{L:ift} and the continuity of the $u(t)$ flow, it must be the case that either  $c(T')=\delta$,  $c(T')=\delta^{-1}$, or $\omega h^{-1}$ (i.e. the case $T'=T''$ does not arise).
\end{remark}

Let $T$, $0<T\leq T'$, be the maximal time such that
\begin{equation}
\label{E:bs1}
\|v\|_{L_{[0,T]}^\infty H_x^1} \leq \alpha \kappa h^{1/2} \,,
\end{equation}
where $\alpha$ is a suitably large constant related to the implicit constants in the estimates (and thus dependent only upon $b_0$ and $\delta>0$).  In fact $\alpha \geq 1$ is taken to be $4$ times the implicit constant in front of $\|v_0\|_{H_x^1}$ in the energy estimate \eqref{E:energy-bd2}.

\begin{remark}
\label{R:bs2}
We will show, assuming that \eqref{E:bs1}  holds, that $\|v\|_{L_{[0,T]}^\infty H_x^1} \leq \frac12 \alpha \kappa h^{1/2}$ and thus by continuity we must have $T = T'$.
\end{remark}

In the remainder of the proof, we work on the time interval $[0,T]$, and we are able to assume that the orthogonality conditions \eqref{E:so} hold,  $\delta \leq c(t) \leq \delta^{-1}$, and that \eqref{E:bs1} holds.    We supress the $\alpha$ dependence in the estimates in  \eqref{E:a''} and \eqref{E:new-10} below.

By Lemma \ref{L:param},  \eqref{E:a}, and \eqref{E:bs1}, (just using that $\|e^{-\epsilon |x-a|}v\|_{H_x^1} \leq \|v\|_{H_x^1}$) it follows that
\begin{equation}
\label{E:a''}
|\dot a - 4c + b(a)| \lesssim \kappa h^{1/2} \,.
\end{equation}
By \eqref{E:a''}, the hypothesis of the local virial estimate Lemma \ref{L:locvir} is satisfied.  Using \eqref{E:bs1} in \eqref{E:new-11} (recall $T=\omega h^{-1} \leq h^{-1}$), we obtain
\begin{equation}
\label{E:new-10}
\|e^{-\epsilon |x-a|} v\|_{L_{[0,T]}^2 H_x^1} \lesssim \kappa h^{1/2} \,.
\end{equation}
Inserting \eqref{E:bs1},  \eqref{E:a''}, and \eqref{E:new-10} into the energy estimate \eqref{E:energy-bd2} (recall $T=\omega h^{-1}$),
we obtain
$$\|v\|_{L_{[0,T]}^\infty H_x^1} \leq \frac{\alpha}{4} \|v_0\|_{H_x^1} + C_\alpha (\kappa^{1/2}h^{1/4}+ \kappa h^{1/2} + \omega^{1/4}) \kappa h^{1/2}$$
Provided $h\lesssim_\alpha \kappa^{-2}$ and $\omega \ll_\alpha 1$, we obtain (recall $\|v_0\|_{H_x^1} \leq \kappa h^{1/2}$), we conclude that $\|v\|_{L_{[0,T]}^\infty H_x^1} \leq \frac12 \alpha \kappa^2 h$, completing the bootstrap, and demonstrating that $T=T'$.  In particular, we have established items \eqref{I:orth}, \eqref{I:time-scale}, \eqref{I:en-bd}, \eqref{I:local-est} in the proposition statement.  

It remains to prove \eqref{I:dota} and \eqref{I:dotc}.  
By Lemma \ref{L:param} \eqref{E:c},
\begin{align}
\notag
\int_0^T | \dot c - \tfrac13 cb'(a)| \, dt &\lesssim h T^{1/2} \|e^{-\epsilon |x-a|} v\|_{L_{[0,T]}^2 H_x^1} + \|e^{-\epsilon |x-a|} v\|_{L_{[0,T]}^2 H_x^1}^2 + Th^2\\
\notag
&\lesssim h T^{1/2} \kappa h^{1/2} + \kappa^2 h +Th^2\\
\label{E:c'''} 
&\lesssim \kappa^2 h \,,
\end{align}
establishing item \eqref{I:dotc}.
Similarly by Lemma \ref{L:param} \eqref{E:a}, we obtain item \eqref{I:dota}.
\end{proof}

The above proposition can be iterated to obtain:

\begin{corollary}
\label{C:main}
Suppose we are given $b_0\in C_c^\infty(\mathbb{R}^2)$ and $\delta>0$.  (Implicit constants and the constant $C$ below depend only on $b_0$ and $\delta$).   Suppose that we are further given $a_0\in \mathbb{R}$, $c_0>0$, $\beta \geq 1$, $h>0$, and $v_0$ satisfying \eqref{E:so}, such that
$$0< h \lesssim \beta^{-8} \,, \qquad \|v_0\|_{H_x^1} \leq \beta h^{1/2} \,.$$  
Let $u(t)$ be the solution to \eqref{E:pKdV} with $b(x,t)=b_0(hx,ht)$ and initial data $\eta(\cdot, a_0,c_0)+v_0$.   Then there exist a time $T'>0$ and trajectories $a(t)$ and $c(t)$ defined on $[0,T']$ such that $a(0)=a_0$, $c(0)=c_0$ and the following holds, with $v\defeq u-\eta(\cdot, a,c)$:
\begin{enumerate}
\item On $[0,T']$, the orthogonality conditions \eqref{E:so} hold.
\item Either $c(T')=\delta$, $c(T')=\delta^{-1}$, or $T' \sim  h^{-1}\log h^{-1}$.
\item $\|v\|_{L_{[0,T']}^\infty H_x^1} \lesssim \beta h^{1/2} e^{Cht} \,,$
\item $\|e^{-\epsilon |x-a|} v\|_{L_{[0,T']}^2 H_x^1}  \lesssim \beta h^{1/2} e^{Cht}$.
\item $\int_0^{T'}|\dot a - 4c^2 +b(a)|\, dt \lesssim \beta e^{Cht}$.
\item $\int_0^{T'} |\dot c - \frac13 cb'(a) | \, dt \lesssim \beta^2h e^{Cht}$.
\end{enumerate}
\end{corollary}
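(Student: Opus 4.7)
The plan is to iterate Proposition \ref{P:main} on a sequence of intervals $[t_k, t_{k+1}]$ of uniform length $\omega h^{-1}$, where $\omega$ is the maximal-lifetime constant produced by the proposition. Starting from $\kappa_0 = \beta$ and $v(t_0)=v_0$, at each step we use the output state $u(t_k)$ as the input for the next application. Item (1) of the proposition ensures that the orthogonality conditions \eqref{E:so} carry over at $t_{k+1}$, so the restart is legitimate, and item (3) gives $\|v(t_{k+1})\|_{H^1} \leq C_1 \kappa_k h^{1/2}$ for some constant $C_1 \geq 1$ depending only on $b_0$ and $\delta$. Thus we may take $\kappa_{k+1} = C_1 \kappa_k$, i.e.\ $\kappa_k = C_1^k \beta$. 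Setting $C := (\log C_1)/\omega$, one has $\kappa_k = \beta e^{Cht_k}$, which matches the growth factor $e^{Cht}$ appearing in the conclusion.

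The iteration is valid as long as the single-step hypothesis $h \lesssim \kappa_k^{-4}$ is met, i.e.\ $C_1^{4k}\beta^4 \lesssim h^{-1}$. Invoking the corollary's hypothesis $h \lesssim \beta^{-8}$ (equivalently $\beta^4 \lesssim h^{-1/2}$), this reduces to $C_1^{4k} \lesssim h^{-1/2}$, which holds for $k \lesssim (\log h^{-1})/(8 \log C_1)$. Multiplying by the step size $\omega h^{-1}$ produces a total lifetime $T' \sim h^{-1}\log h^{-1}$, giving item (2); if instead $c(t)$ exits $[\delta, \delta^{-1}]$ at some earlier $t_k$, the iteration terminates there and item (2) holds through the first alternative.

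The quantitative estimates transfer by pointwise or $\ell^2$-in-$k$ comparison. The global bound (item 3) follows directly from $\sup_{t \in [t_k,t_{k+1}]}\|v(t)\|_{H^1} \lesssim \kappa_k h^{1/2} = \beta h^{1/2} e^{Cht_k}$. The local $L^2$ bound (item 4) is a geometric sum in $k$,
\[
\|e^{-\epsilon|x-a|}v\|_{L^2_{[0,T']}H^1}^2 \;=\; \sum_k \|e^{-\epsilon|x-a|}v\|_{L^2_{[t_k,t_{k+1}]}H^1}^2 \;\lesssim\; \sum_k \kappa_k^2 h \;\lesssim\; \beta^2 h\, e^{2ChT'},
\]
dominated by its last term; items (5) and (6) follow the same pattern from the per-interval bounds $\int_{t_k}^{t_{k+1}}|\dot a - 4c^2 + b(a)|\,dt \lesssim \kappa_k$ and $\int_{t_k}^{t_{k+1}}|\dot c - \tfrac13 c b'(a)|\,dt \lesssim \kappa_k^2 h$, which geometric-sum to $\beta e^{ChT'}$ and $\beta^2 h e^{ChT'}$ respectively (absorbing numerical factors into the redefinition of $C$).

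The only delicate point is the compatibility of exponents: one needs the corollary's hypothesis $h \lesssim \beta^{-8}$ to strictly dominate the accumulated single-step requirement $h \lesssim \kappa_{\text{final}}^{-4}$ at the terminal iterate. The factor-of-two safety margin between $\beta^{-8}$ and $\kappa^{-4}$ is exactly what permits $\log h^{-1}$-many iterations, which is what is needed to push the soliton parameter tracking out to the Ehrenfest time $h^{-1}\log h^{-1}$; with any smaller exponent in the corollary's hypothesis the iteration would stall after $O(1)$ steps and one would recover only the $h^{-1}$ lifetime of Proposition \ref{P:main}.
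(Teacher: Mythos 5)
Your proposal is correct and is essentially the paper's own argument: the paper's proof of Corollary \ref{C:main} is precisely this iteration of Proposition \ref{P:main} on consecutive intervals $I_j=[(j-1)\omega h^{-1},j\omega h^{-1}]$ with $\kappa_j=\beta K^j$, terminating after $J\sim\log h^{-1}$ steps when the constraint $h\lesssim\kappa_j^{-4}$ (combined with $\beta^4\lesssim h^{-1/2}$) is exhausted. Your write-up merely supplies the bookkeeping (restart via item (1), geometric summation of the per-interval bounds, absorption of factors into $C$) that the paper leaves implicit.
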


\begin{proof}
Let $K\gg 1$ be the constant that appears in item \eqref{I:en-bd} of Prop \ref{P:main}, and $0 < \omega \ll 1$ be such that $T'=\omega h^{-1}$ in item \eqref{I:time-scale} of Prop. \ref{P:main}.  Let $\kappa_j = \beta K^j$ for $1\leq j \leq J$, where $J$ is such that $K^J \sim h^{-1/4}$.  Let $I_j$ denote the time interval $I_j=[(j-1)\omega h^{-1}, j\omega h^{-1}]$.  Apply Prop. \ref{P:main} on $I_j$ with $\kappa = \kappa_j$.
\end{proof}

Now we complete the proof of Theorem \ref{T:main}.  Recall that we are given $b_0 \in C_c^\infty(\mathbb{R}^2)$, $\delta>0$, $a_0\in \mathbb{R}$, and $c_0>0$.  Let $A(\tau)$, $C(\tau)$, and $T_*$ be given as in Def. \ref{D:time-scale}.  Let $T'$, $a(t)$, $c(t)$ be as given in Cor. \ref{C:main}.  Let $\tilde a(t) = h^{-1}A(ht)$ and $\tilde c(t) = C(ht)$.  Then

$$
\left\{
\begin{aligned}
& \dot {\tilde a} - 4\tilde c^2 + b(\tilde a) = 0 \\
& \dot {\tilde c} - \tfrac13 \tilde c b'(\tilde a) =0
\end{aligned}
\right.
$$
on $0\leq t \leq h^{-1}T_*$.  Then
\begin{align*}
|a - \tilde a|(t) &\leq \int_0^t |\dot a - \dot{\tilde a}| \, ds \\
& \leq \int_0^t | (4c^2-b(a)) - (4\tilde c^2 - b(\tilde a)) |(s) \,ds + \int_0^t |\dot a- 4c^2 + b(a)|(s) \, ds \\
&\lesssim \int_0^t |c-\tilde c|(s) \,ds + h \int_0^t|a-\tilde a| \,ds + \beta^2e^{Cht}
\end{align*}
By Gronwall's inequality,
\begin{equation}
\label{E:ODE1}
| a-\tilde a|(t) \lesssim e^{Cht} \left( \int_0^t |c-\tilde c|(s) \,ds + \beta^2 \right) \,.
\end{equation}
Also,
\begin{align*}
|c -\tilde c|(t) 
&\lesssim \left| \frac{c}{\tilde c} -1 \right|(t) \lesssim \left| \ln \frac{c}{\tilde c} \right|(t) = |\ln c - \ln \tilde c|(t) = \int_0^t \left| \frac{\dot c}{c} - \frac{\dot{\tilde c}}{\tilde c} \right|(s) \, ds \\
&\lesssim \int_0^t |b'(a) - b'(\tilde a) | \, ds + \int_0^t |\frac{\dot c}{c} - \frac13 b'(a)| \,ds \\
&\lesssim h \int_0^t |a-\tilde a| \, ds + \beta^2 he^{Cht}
\end{align*}
Combining, and applying Gronwall's inequality again, we obtain
$$|c-\tilde c|(t) \lesssim \beta^2 he^{Cht} \,.$$
Substitution back into \eqref{E:ODE1} yields
$$|a - \tilde a|(t) \lesssim \beta^2 e^{Cht}$$
This completes the proof of Theorem \ref{T:main}.

\appendix

\section{Global well-posedness}

In this section, we prove that \eqref{E:pKdV} is globally well-posed in $H^1$.  The local well-posedness (Prop. \ref{P:local} below) is a consequence of the local smoothing and maximal function estimate of Kenig-Ponce-Vega \cite{KPV} and the global well-posedness follows from the local well-posedness and the nearly conserved $L^2$ norm and Hamiltonian (Prop. \ref{P:global} below).  A similar argument is given in Apx. A of \cite{DS} with an additional smallness assumption on $b$.  This smallness assumption could be removed by scaling their result.  However, for expository purposes we present a shorter proof here, which also imposes fewer hypotheses on $b$.

In this section, we adopt the notation $L_T^p$ to mean $L_{[0,T]}^p$ and $C_TH_x^s$ to mean $C([0,T]; H_x^s)$, etc.  The ordering of multiple norms is standard: for example, $\|w\|_{L_x^2L_T^\infty} =  \| \, \|w\|_{L_T^\infty} \, \|_{L_x^2}$.

\begin{proposition}[local well-posedness of \eqref{E:pKdV} in $H^1$]
\label{P:local}
Let $X$ be the space of functions on $[0,T]\times \mathbb{R}$ defined by the norm
$$\|w\|_X = \|w\|_{L_x^2L_T^\infty} + \|w\|_{C_{t\in [0,T]} H_x^1}$$
Suppose that 
$$A \defeq \|b\|_{L_x^2L_{t\in [0,1]}^\infty} + \|\partial_x b\|_{L_{t\in [0,1]}^\infty L_x^2}<\infty$$ 
and $\phi\in H^1$.  Then there exists $T=T(A, \|\phi\|_{H^1})\leq 1$ and a  solution $u\in X$ to \eqref{E:pKdV} with initial data $\phi$ on $[0,T]$.  This solution is the unique solution belonging to the function class $X$.  Moreover, the data-to-solution map is Lipschitz continuous.
\end{proposition}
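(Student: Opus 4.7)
The plan is to construct $u$ as the unique fixed point in $X$ of the Duhamel iteration
\begin{equation*}
u(t) = W(t)\phi + \int_0^t W(t-s)\,\partial_x\bigl(-3u^2 + bu\bigr)(s)\,ds, \qquad W(t) := e^{-t\partial_x^3},
\end{equation*}
by a contraction argument on a ball $B_R \subset X$ of radius $R$ a fixed multiple of $\|\phi\|_{H^1}$, over a short time interval $[0,T]$ with $T\leq 1$ chosen at the end in terms of $A$ and $R$. Lipschitz dependence on the initial data (and hence uniqueness in $X$) will follow by running the same estimates on the difference of two solutions.

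The linear ingredients are the Kenig--Ponce--Vega estimates for the Airy group: (a) the Kato local smoothing estimate $\|\partial_x W(t)\phi\|_{L_x^\infty L_T^2}\lesssim \|\phi\|_{L_x^2}$ together with its inhomogeneous companion $\bigl\|\int_0^t W(t-s)\partial_x H\,ds\bigr\|_{L_T^\infty L_x^2 \,\cap\, L_x^\infty L_T^2}\lesssim \|H\|_{L_x^1 L_T^2}$; and (b) the maximal function estimate $\|W(t)\phi\|_{L_x^2 L_T^\infty}\lesssim \langle T\rangle^{1/2}\|\phi\|_{H^s}$ (any $s>3/4$), with its inhomogeneous analog $\bigl\|\int_0^t W(t-s)\partial_x H\,ds\bigr\|_{L_x^2 L_T^\infty}\lesssim \langle T\rangle^{1/2}\|H\|_{L_x^1 L_T^2}$. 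Applying these with $H = -3u^2 + bu$ (for the $L_x^2 L_T^\infty$ and $L_T^\infty L_x^2$ parts of $\|u\|_X$) and with $H = \partial_x(-3u^2 + bu)$ (for the $L_T^\infty L_x^2$ norm of $\partial_x u$) reduces the problem to bounding $G := -3u^2 + bu$ and $\partial_x G$ in $L_x^1 L_T^2$. For the nonlinearity, H\"older in $t$ and Fubini give
\begin{equation*}
\|u^2\|_{L_x^1 L_T^2}\leq T^{1/2}\|u\|_{L_x^2 L_T^\infty}\|u\|_{L_T^\infty L_x^2},\quad \|u\,\partial_x u\|_{L_x^1 L_T^2}\leq T^{1/2}\|u\|_{L_x^2 L_T^\infty}\|u\|_{L_T^\infty H_x^1},
\end{equation*}
each $\leq T^{1/2}\|u\|_X^2$. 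Splitting $\partial_x(bu) = b\,\partial_x u + u\,\partial_x b$ and using the two pieces of the $A$-hypothesis separately,
\begin{equation*}
\|b\,\partial_x u\|_{L_x^1 L_T^2}\leq T^{1/2}\,A\,\|u\|_{L_T^\infty H_x^1},\quad \|u\,\partial_x b\|_{L_x^1 L_T^2}\leq T^{1/2}\,A\,\|u\|_{L_x^2 L_T^\infty},
\end{equation*}
each carrying the crucial factor $T^{1/2}$ that closes the contraction for $T$ small depending only on $A$ and $R$.

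The main obstacle --- and the reason the KPV machinery is needed rather than a pure energy estimate --- is the apparent derivative loss in the inhomogeneities $\partial_x(u^2)$ and $\partial_x(bu)$. This loss is absorbed precisely by the one-derivative gain in the local smoothing pairing: the factor $\|u\|_{L_x^2 L_T^\infty}$ supplied by the maximal function estimate is multiplied against $\|\partial_x u\|_{L_x^2 L_T^2}$, which follows from the $C_T H_x^1$ factor via Fubini and H\"older in $t$ (picking up a $T^{1/2}$). The two hypotheses defining $A$ are tailored exactly to the two summands of the split $\partial_x(bu)$, so no additional regularity on $b$ is needed. For the difference $u_1 - u_2$, the bilinear nature of both the nonlinearity and the potential term yields identical estimates; these give $\|u_1 - u_2\|_X \leq \tfrac12\|u_1 - u_2\|_X + C\|\phi_1 - \phi_2\|_{H^1}$ for $T$ small, establishing both uniqueness in $X$ and Lipschitz continuous dependence.
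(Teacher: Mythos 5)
Your proposal is correct and follows essentially the same route as the paper: a contraction in the same norm $\|w\|_{L_x^2L_T^\infty}+\|w\|_{C_TH_x^1}$, using the Kenig--Ponce--Vega local smoothing and maximal function estimates for the Airy group together with H\"older bounds on $u^2$, $\partial_x(u^2)$, $bu$, and $\partial_x(bu)$ in $L_x^1L_T^2$, each carrying the factor $T^{1/2}$ that closes the argument for $T$ small depending on $A$ and $\|\phi\|_{H^1}$. The only quibble is that your stated inhomogeneous maximal function estimate with just $\|H\|_{L_x^1L_T^2}$ on the right is slightly too strong --- the version the paper derives (via Christ--Kiselev plus dual local smoothing) needs $\|H\|_{L_x^1L_T^2}+\|\partial_x H\|_{L_x^1L_T^2}$ --- but since you bound $\partial_x G$ in $L_x^1L_T^2$ anyway, this does not affect the argument.
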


\begin{proof}
Let $U$ denote the linear flow (no potential) operator, a mapping from functions of $x$ to functions of $(x,t)$, defined by
$$(U\phi)(x,t) = e^{-t\partial_x^3}\phi(x) = \frac1{2\pi}\int_\xi e^{ix\xi^3} \hat \phi(\xi) \, d\xi \,.$$ 
Let $I$ denote the Duhamel operator, a mapping from functions of $(x,t)$ to functions of $(x,t)$, defined by
$$(If)(x,t) = \int_0^t e^{-(t-t')\partial_x^3} f(\cdot, t') \, dt' \,.$$
That is, if $w=U\phi$, then $w$ solves the homogeneous initial-value problem $\partial_t w + \partial_x^3w =0$ with $w(0,x)=\phi(x)$.  If $w=If$, then $w$ solves the inhomogeneous initial-value problem $\partial_t w + \partial_x^3w =f$ with $u(0,x)=0$.

Kenig-Ponce-Vega \cite{KPV-JAMS, KPV} establish the estimates
\begin{align}
\label{E:est-1}
&\| U\phi \|_{C_T L_x^2} \leq \|\phi\|_{L_x^2} \\
\label{E:est-2}
&\| U \phi \|_{L_x^2 L_T^\infty} \leq \| \phi \|_{H_x^1} \\
\label{E:est-3}
&\| \partial_x I f \|_{C_T L_x^2} \leq \|f\|_{L_x^1L_T^2} \\
\label{E:est-4}
&\| \partial_x I f \|_{L_x^2L_T^\infty} \leq \| f\|_{L_x^1L_T^2}+ \| \partial_x f\|_{L_x^1L_T^2}
\end{align}
with implicit constants independent of $0\leq T\leq 1$.  In fact, \eqref{E:est-1} is just the unitarity of $U(t)$ on $L_x^2$, \eqref{E:est-2} is (2.12) in Cor. 2.9 in \cite{KPV-JAMS}, \eqref{E:est-3} is (3.7) in Theorem 3.5(ii) in \cite{KPV}, and \eqref{E:est-4} is not explicitly contained in \cite{KPV-JAMS, KPV}, but can be deduced from the above quoted estimates as follows.  By the Christ-Kiselev lemma as stated and proved in Lemma 3 of Molinet-Ribaud \cite{MR}, it suffices to show that
$$\left\| \partial_x \int_0^T U(t-t')f(t') \, dt' \right\|_{L_x^2 L_T^\infty} \lesssim \|  f\|_{L_x^1L_T^2} +  \| \partial_x f\|_{L_x^1L_T^2} \,.$$
By first applying \eqref{E:est-2} and then the dual to the local smoothing estimate $\| \partial_x U \phi \|_{L_x^\infty L_T^2} \lesssim \|\phi\|_{L_x^2}$ (Lemma 2.1 in \cite{KPV-JAMS}), we obtain 
\begin{align*}
\indentalign
\left\| \partial_x \int_0^T U(t-t')f(t') \, dt' \right\|_{L_x^2 L_T^\infty} \\
&\lesssim  \left\|  \partial_x \int_0^T U(-t')f(t') \, dt' \right\|_{L_x^2} +  \left\|  \partial_x \int_0^T U(-t')\partial_x f(t') \, dt' \right\|_{L_x^2} \\
&\lesssim \| f\|_{L_x^1L_T^2} + \|\partial_x f\|_{L_x^1 L_T^2} \,,
\end{align*}
as claimed.

Let $\Phi$ be the mapping 
\begin{equation}
\label{E:cm}
\Phi(w) = U\phi + \partial_x I(w^2-bw) \,,
\end{equation}
We seek a fixed point $\Phi(u)=u$ in some ball in the space $X$.  To control inhimogeneities,  we need the following four estimates, which are consequences of H\"older's inequality:
\begin{align}
\label{E:est-5}
&\| \partial_x (bu) \|_{L_x^1L_T^2} \lesssim T^{1/2}( \|\partial_x b\|_{L_T^\infty L_x^2}\|u\|_{L_x^2L_T^\infty} + \|b\|_{L_x^2L_T^\infty} \|\partial_x u\|_{L_T^\infty L_x^2}) \\
\label{E:est-6}
&\| bu \|_{L_x^1L_T^2} \lesssim T^{1/2} \|b\|_{L_x^2L_T^\infty} \|u\|_{L_T^\infty L_x^2}\\
\label{E:est-7}
&\| \partial_x (u^2) \|_{L_x^1L_T^2} \lesssim T^{1/2}\|u\|_{L_x^2L_T^\infty} \|\partial_x u\|_{L_T^\infty L_x^2} \\
\label{E:est-8}
&\| u^2 \|_{L_x^1L_T^2} \lesssim T^{1/2} \|u\|_{L_x^2L_T^\infty} \|u\|_{L_T^\infty L_x^2}
\end{align}
We prove \eqref{E:est-5}.
\begin{align*}
\| \partial_x (bu) \|_{L_x^1L_T^2} &\leq \| \partial_x b \|_{L_x^2L_T^2} \| u\|_{L_x^2 L_T^\infty} + \|b \|_{L_x^2L_T^\infty} \|\partial_x u\|_{L_x^2L_T^2} \\
&\leq \| \partial_x b \|_{L_T^2L_x^2} \| u\|_{L_x^2 L_T^\infty} + \|b \|_{L_x^2L_T^\infty} \|\partial_x u\|_{L_T^2L_x^2}\\
&\leq T^{1/2}\| \partial_x b \|_{L_T^\infty L_x^2} \| u\|_{L_x^2 L_T^\infty} + T^{1/2}\|b \|_{L_x^2L_T^\infty} \|\partial_x u\|_{L_T^\infty L_x^2}
\end{align*}
which is \eqref{E:est-5}.  The other estimates, \eqref{E:est-6}, \eqref{E:est-7}, \eqref{E:est-8} are proved similarly.

By \eqref{E:est-2}, \eqref{E:est-4},
\begin{equation}
\label{E:est-9}
\| \Phi(w) \|_{L_x^2L_T^\infty} \lesssim \|\phi\|_{H^1} + \| (w^2-bw) \|_{L_x^1L_T^2} + \| \partial_x(w^2-bw)\|_{L_x^1L_T^2}
\end{equation}
By \eqref{E:est-1}, \eqref{E:est-3},
\begin{equation}
\label{E:est-10}
\| \Phi(w) \|_{L_T^\infty L_x^2} \lesssim \| \phi \|_{L_x^2} + \| (w^2-bw) \|_{L_x^1 L_T^2}
\end{equation}
Applying $\partial_x$ to \eqref{E:cm} and estimating with \eqref{E:est-1}, \eqref{E:est-3},
\begin{equation}
\label{E:est-11}
\| \partial_x \Phi(w) \|_{L_T^\infty L_x^2} \lesssim \|\partial_x \phi \|_{L_x^2} + \| \partial_x (w^2-bw) \|_{L_x^1 L_T^2}
\end{equation}
Combining \eqref{E:est-9}, \eqref{E:est-10}, \eqref{E:est-11}, and bounding the right-hand sides using \eqref{E:est-5}, \eqref{E:est-6}, \eqref{E:est-7}, \eqref{E:est-8}, we obtain
\begin{equation}
\label{E:est-12}
\| \Phi(w) \|_X \leq  C\|\phi \|_{H^1} + CT^{1/2}(A\|w\|_{X} + \|w\|_X^2)
\end{equation}
Let $B=2C\|\phi \|_{H^1}$, and consider $X_B = \{ \, w\in X \, | \, \|w\|_X \leq B \}$ and $T\leq \frac1{16} C^{-2} \min(A^{-2}, B^{-2})$.  Then \eqref{E:est-12} implies that $\Phi: X_B \to X_B$.  

We similarly establish that $\Phi$ is a contraction on $X_B$, which completes the proof.
\end{proof}

\begin{proposition}[global well-posedness of \eqref{E:pKdV} in $H^1$]
\label{P:global}
Suppose that $b\in C^1(\mathbb{R}^{1+1})$ and satisfies the following.  
Suppose that for every unit-sized time interval $I$, we have
$$\|b\|_{L_x^2L_{t\in I}^\infty} + \|\partial_x b\|_{L_{t\in I}^\infty L_x^2}<\infty \,.$$
(the bound need not be uniform with respect to all time intervals).  
Also suppose that for all $t$, $$\| \partial_x b(t) \|_{L_x^\infty} < \infty \,, \qquad \|\partial_t b(t) \|_{L_x^\infty} < \infty \,.$$
Let $\phi\in H^1$.  Then the local $H^1$ solution to \eqref{E:pKdV} with initial data $\phi$ given by Prop. \ref{P:local} extends to a global solution with
$$\|u(t) \|_{H^1} \lesssim  \la \|\phi \|_{H^1} \ra^4  \left(\| b\|_{L_{[0,t]}^\infty L_x^\infty} + \int_0^t  \|b_t(s)\|_{L_x^\infty} e^{\gamma(s)} \, ds\right) \,,$$
where $\gamma(s)$ is given by
$$\gamma(t) = \int_0^t \|b_x(s)\|_{L_{[0,s]}^\infty L_x^\infty} \, ds \,.$$
\end{proposition}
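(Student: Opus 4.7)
The plan is to combine the local result of Prop.~\ref{P:local} with an a priori $H^1$ bound coming from the two approximate conservation laws of \eqref{E:pKdV} (the mass and the Hamiltonian), and then iterate to extend the solution globally. The hypotheses on $b$ guarantee that the quantity $A$ controlling the local existence time in Prop.~\ref{P:local} is finite on every unit-length time interval, so once the a priori bound is in hand the continuation argument is routine.

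For the mass, pair \eqref{E:pKdV} with $u$ and integrate by parts: the $\partial_x^3$ and cubic pieces drop out by antisymmetry, and $\int u\, \partial_x(bu) = \tfrac12\int b_x u^2$. Gronwall then yields $\|u(t)\|_{L^2}^2 \leq \|\phi\|_{L^2}^2\, e^{\gamma(t)}$. For the Hamiltonian, with $H(u,t)=\tfrac12\int(u_x^2-2u^3+bu^2)\,dx$, one recognizes \eqref{E:pKdV} as the non-autonomous Hamilton flow $\partial_t u = \partial_x H'(u)$ with $H'(u)=-u_{xx}-3u^2+bu$; the antisymmetry of $\partial_x$ kills $\la H'(u), \partial_t u\ra$ and only the explicit $t$-dependence of $b$ survives, producing $\tfrac{d}{dt}H = \tfrac12\int b_t u^2$. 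Integrating in time and inserting the mass bound gives
$$H(u(t),t) \leq H(\phi,0) + \tfrac12 \|\phi\|_{L^2}^2 \int_0^t \|b_t(s)\|_{L^\infty_x}\, e^{\gamma(s)}\, ds \,.$$

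To convert $H$ into an $H^1$ bound, rearrange $\|u_x\|_{L^2}^2 = 2H(u,t) + 2\int u^3 - \int bu^2$ and estimate the cubic term via the one-dimensional Gagliardo-Nirenberg inequality $\|u\|_{L^3}^3 \lesssim \|u\|_{L^2}^{5/2}\|u_x\|_{L^2}^{1/2}$, followed by Young's inequality to absorb a small multiple of $\|u_x\|_{L^2}^2$ into the left-hand side. This yields $\|u_x\|_{L^2}^2 \lesssim H(u,t) + \|u\|_{L^2}^{10/3} + \|b(t)\|_{L^\infty_x}\|u\|_{L^2}^2$. Combining this with the previous two steps and bounding $H(\phi,0) \lesssim \|\phi\|_{H^1}^3 + \|b(0)\|_{L^\infty_x}\|\phi\|_{L^2}^2$ (once more by GN, now applied to $\phi$) produces the stated a priori bound: the $\la\|\phi\|_{H^1}\ra^4$ envelope comes from the cubic $\|\phi\|_{H^1}^3$ term in $H(\phi,0)$ together with the $\|u\|_{L^2}^{10/3}$ contribution, while the factor $\|b\|_{L^\infty_{[0,t]}L^\infty_x} + \int_0^t \|b_t(s)\|_{L^\infty_x}\, e^{\gamma(s)}\, ds$ records the $b$-dependence through both $H(\phi,0)$ and the time-integrated drift of $H$.

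The only technical obstacle is the absorption in the third step: the Gagliardo-Nirenberg exponents must be arranged so that absorption is legal and the surviving $\|u\|_{L^2}$ power combines correctly with the mass bound to match the stated form. Once the a priori $H^1$ estimate is established, global existence follows by repeatedly applying Prop.~\ref{P:local} on successive intervals of length $T_0=T_0(\|u(t_k)\|_{H^1},A_k)$, where $A_k$ is the value of $A$ on the $k$-th unit interval; the assumption that $A$ is finite on each unit time interval (not necessarily uniformly) prevents $T_0$ from collapsing and permits iteration to arbitrarily large times.
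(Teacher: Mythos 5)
Your proposal is correct and follows essentially the same route as the paper: the approximate conservation laws $\partial_t P=\int b_x u^2$ and $\partial_t H=\tfrac12\int b_t u^2$ with Gronwall on $P$, the Gagliardo--Nirenberg/Peter--Paul absorption $\|u\|_{L^3}^3\leq \tfrac18\|u_x\|_{L^2}^2+C\|u\|_{L^2}^{10/3}$ to convert $H$ into an $H^1$ bound, and iteration of Prop.~\ref{P:local} using that $A$ is finite on each unit time interval. No gaps; your version even spells out the bound on $H(\phi,0)$ that the paper leaves implicit.
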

\begin{proof}
Let $P(t) = \|u(t)\|_{L^2}^2$ (the momentum) and recall the definition \eqref{E:Hamiltonian} of $H$, the Hamiltonian.  Direct computation shows that
$$\partial_t P = \int  b_x u^2 \, dx \,, \qquad \partial_t H = \frac12 \int b_t u^2 \, dx \,.$$
 Then $|P'(t)| \leq \gamma'(t)P(t)$, and hence $\partial_t [e^{-\gamma(t)}P(t)] \leq 0$.  From this, we conclude that 
$$P(t) \leq e^{\gamma(t)}P(0) \,.$$  
In addition, we have
$$|H'(t)| \leq \|b_t(t)\|_{L_x^\infty} P(t) \leq \|b_t(t)\|_{L_x^\infty} e^{\gamma(t)} P(0)$$
Hence
$$H(t) \leq H(0) + P(0) \int_0^t  \|b_t(s)\|_{L_x^\infty} e^{\gamma(s)} \, ds$$
By the Gagliardo-Nirenberg inequality $\|u\|_{L^3}^3 \leq \|u\|_{L^2}^{5/2}\|\partial_x u\|_{L^2}^{1/2}$ and the Peter-Paul inequality, we have $\|u\|_{L^3}^3 \leq \frac18 \|u_x\|_{L_x^2}^2 + C \|u\|_{L_x^2}^{10/3}$.  Hence
$$\|u_x\|_{L_x^2}^2 \leq C \|u\|_{L_x^2}^{10/3} + \|b(t)\|_{L_x^\infty}\|u(t)\|_{L_x^2}^2 + H(t)$$
When combined with the inequalities for $H(t)$ and $P(t)$, this gives the conclusion. 
\end{proof}

\end{document}